\DeclareMathOperator{\Aut}{Aut}
\DeclareMathOperator{\OG}{\mathcal{OG}}
\DeclareMathOperator{\B}{\mathcal{B}}
\theoremstyle{plain}
\newtheorem{Theorem}{Theorem}
\newtheorem{Proposition}[Theorem]{Proposition}
\newtheorem{Definition}{Definition}
\newtheorem{Remark}{Remark}
\newtheorem{Lemma}[Theorem]{Lemma}
\title{Basic Tetravalent Oriented Graphs of Independent-Cycle Type}
\author{Nemanja Poznanovi\'{c}\footnote{University of Melbourne, Australia.  email: {nempoznanovic@gmail.com}} \hspace{1cm} Cheryl E. Praeger\footnote{University of Western Australia, Australia. email:  {cheryl.praeger@uwa.edu.au}}}
\begin{document}
	\maketitle
	
	\begin{abstract}
		The family $\OG(4)$ consisting of graph-group pairs $(\Gamma, G)$, where $\Gamma$ is a finite, connected, 4-valent graph admitting a $G$-vertex-, and $G$-edge-transitive, but not $G$-arc-transitive action, has recently been examined using a normal quotient methodology.  A subfamily of $\OG(4)$ has been identified as `basic', due to the fact that all members of $\OG(4)$ are normal covers of at least one basic pair. 
		We provide an explicit classification of those basic pairs $(\Gamma, G)$ which have at least two independent cyclic $G$-normal quotients (these are $G$-normal quotients which are not extendable to a common cyclic normal quotient). 
	\end{abstract}
	
	\section{Introduction}
	Finite tetravalent graphs admitting a half-arc-transitive group action have been consistently and actively studied since the 1990s. Several different trends have emerged in this research, often focusing on a particular aspect of these objects, such as their alternating cycles \cite{marusic1998half, marusic1999alt, potocnik2017radius}, their normal quotients \cite{al2017cycle, al2015finite, poznanovic2021four}, their vertex stabilisers \cite{marusic2001point, marusic2005quartic, potovcnik2011vstabunbdd, xia2019tetravalent}, or their role as medial graphs for regular maps on surfaces \cite{marusic1998maps}. 
	In \cite{al2015finite}, a general framework  was introduced for studying the family $\OG(4)$ of graph-group pairs $(\Gamma,G)$, with $\Gamma$ a finite connected tetravalent graph, and $G$ a vertex- and edge-transitive, but not arc-transitive, group of automorphisms.  This method (described in detail below) uses a normal quotient reduction, and has already been successfully used to study other families of graphs exhibiting particular symmetry conditions, see for instance \cite{morris2009strongly, praeger1993nan, praeger1999finite}.  The basic aim of the method is to describe  $\OG(4)$ using graph quotients arising from normal subgroups of the groups contained in the family.

	Given a pair $(\Gamma, G)\in \OG(4)$, and a normal subgroup $N$ of $G$, we may define a \textit{$G$-normal quotient} graph $\Gamma_N$ of $\Gamma$. The vertices of $\Gamma_N$ are the $N$-orbits in $V\Gamma$, and there is an edge between two vertices of $\Gamma_N$ if and only if there is at least one edge between vertices from the corresponding $N$-orbits in $V\Gamma$. The group $G$ then induces a group $G_N$ of automorphisms of $\Gamma_N$, so that we obtain another graph-group pair $(\Gamma_N, G_N)$. The  important result \cite[Theorem 1.1]{al2015finite} then tells us that taking a normal quotient of a pair $(\Gamma, G)\in \OG(4)$ either produces another pair $(\Gamma_N, G_N) \in \OG(4)$ (and in this case $\Gamma$ is a $G$-normal cover of $\Gamma_N$), or the quotient graph $\Gamma_N$ is isomorphic to one of $K_1, K_2$ or $\mathbf{C}_r$ for some $r\geq 3$. In the latter case we say that the quotient is \textit{degenerate}.
	
	In light of this result, we say that a pair $(\Gamma, G) \in \OG(4)$ is \textit{basic} if all of its $G$-normal quotients relative to non-trivial normal subgroups of $G$ are degenerate. By \cite[Theorem 1.1]{al2015finite}, it follows that every member of $\OG(4)$ is a normal cover of at least one basic pair. Hence the authors of \cite{al2015finite} suggest that we may obtain a description of $\OG(4)$ by producing a good description of the basic pairs,  and combining it with a theory to describe their $G$-normal covers.
	
	With this aim in mind, the authors of \cite{al2015finite} further divide  the basic pairs in $\OG(4)$ into three types. A pair $(\Gamma, G) \in \OG(4)$ is said to be basic of \textit{quasiprimitive type} if the only degenerate $G$-normal quotient of $\Gamma$ is $K_1$. This occurs precisely when all non-trivial normal subgroups of $G$ are transitive on $V\Gamma$; such a permutation group is called \emph{quasiprimitive}.
	If the only degenerate $G$-normal quotients of a basic pair $(\Gamma, G) \in \OG(4)$ are the graphs $K_1$ or $K_2$, and $\Gamma$ has at least one $G$-normal quotient isomorphic to $K_2$, then $(\Gamma, G)$ is said to be basic of \textit{biquasiprimitive type}. (The group $G$ here is biquasiprimitive: it is not quasiprimitive but each nontrivial normal subgroup has at most two orbits.) 
	The other basic pairs in $\OG(4)$ must have at least one normal quotient isomorphic to a cycle graph $\mathbf{C}_r$ for some $r\geq 3$, and these basic pairs are said to be of \textit{cycle type.}
	
	In the first paper on this topic \cite{al2015finite}, the authors manage to provide a general description of the basic pairs of quasiprimitive type \cite[Theorem 1.3]{al2015finite} by applying the structure theorem for finite quasiprimitive permutation groups given in  \cite{praeger1993nan}. 
	The basic pairs of biquasiprimitive type were analysed and described in \cite{poznanovic2021four}, where a method analogous to the quasiprimitive case was used, this time applying the less-detailed structure theorem for biquasiprimitive permutation groups in \cite{praeger2003finite}.  A description of the basic biquasiprimitive pairs is given in \cite[Theorem 1.1]{poznanovic2021four}.
	
	The remaining basic pairs are those having at least one cyclic normal quotient. As noted above, these pairs are said to be basic of cycle type. Since there is no general theory describing the groups appearing in these pairs, they are the most difficult to analyse and describe. 
	These pairs are also more complex than both the quasiprimitive and biquasiprimitive basic pairs, in that there is much greater diversity in the possible degenerate quotients they can have. In particular, a basic pair $(\Gamma, G)\in\OG(4)$ of cycle type may have many non-isomorphic cyclic normal quotients. These quotients, despite all being cycles, may have different orders and may be $G$-oriented or $G$-unoriented depending on the $G$-action on the quotient graph, see \cite[Theorem 1]{al2017cycle}. 
	
	In trying to understand the possible types of degenerate quotients which may occur for basic pairs of cycle type, the importance of considering so-called `independent' cyclic normal quotients was demonstrated in \cite{al2017cycle}. Two cyclic normal quotients of a pair ($\Gamma, G) \in \OG(4)$ are said to be \emph{independent cyclic normal quotients} if they are not extendable to a common cyclic normal quotient of $(\Gamma, G)$. 
	
	If $(\Gamma, G)$ is a basic pair of cycle type and \textit{does not} have independent cyclic normal quotients, then all of its cyclic normal quotients will be either $G$-oriented, or all of its cyclic normal quotients will be $G$-unoriented (see \cite[Section 3]{cyclepaper}). Hence the basic pairs of cycle type were further divided into three types in \cite{cyclepaper} as follows. A basic pair $(\Gamma, G) \in \OG(4)$ with cyclic normal quotients is 
	\begin{enumerate}
		\item basic of \textit{oriented-cycle type} if $(\Gamma, G)$ does not have independent cyclic normal quotients \textbf{and} all of its cyclic normal quotients are $G$-oriented,
		\item basic of \textit{unoriented-cycle type} if $(\Gamma, G)$ does not have independent cyclic normal quotients \textbf{and} all of its cyclic normal quotients are $G$-unoriented,
		\item basic of \textit{independent-cycle type} if $(\Gamma, G)$ has independent cyclic normal quotients.	
	\end{enumerate}
	Through a careful analysis, a description of the basic pairs of oriented-cycle type and unoriented-cycle type was obtained in \cite[Theorem 1.1]{cyclepaper}.
	
	On the other hand, the existence of independent cyclic quotients of a pair $(\Gamma, G) \in \OG(4)$ proves to be restrictive enough to allow for a description of all such pairs. In particular, by \cite[Theorem 2]{al2017cycle}, if a pair $(\Gamma, G)$  has independent cyclic normal quotients then it is a $G$-normal cover of a pair $(\overline{\Gamma}, \overline{G})\in \OG(4)$ which belongs to one of six infinite families appearing in \cite[Table 1]{al2017cycle}. The (underlying unoriented)  graphs in these families are all either direct products of two cycles $\mathbf{C}_r \times \mathbf{C}_s$ for some $r,s\geq 3$, or are induced subgraphs or standard double covers of these direct product graphs.
	
	It follows that all basic pairs $(\Gamma, G)$ of independent-cycle type belong to one of the families of examples in \cite[Table 1]{al2017cycle}. The objective of this paper is to identify which members of these families are in fact basic. Our main result is Theorem \ref{IndMainTheorem}. It identifies explicitly these basic members.
	The graphs and groups mentioned in this theorem are all defined in detail in Definitions \ref{IndDef} and \ref{IndDefDouble} in the next section. 
	
	\begin{Theorem}\label{IndMainTheorem} 
		A pair $(\Gamma, G) \in \OG(4)$ is basic of  independent-cycle type if and only if it appears in one of the lines of Table \ref{IndMainTable}. In each row $p$ and $q$ denote arbitrary odd primes.  
		
		\begin{table}[H]
			\centering
			\begin{tabular} {cccl} 
				\hline $\Gamma $& $G$& $(r, s)$ & Defined in  \\
				\hline $\Gamma(r, s) $& $G(r, s)$  & $(4,p), (p,4), $ or $(p,q)$ & Definition \ref{IndDef}\\
				$	\Gamma^{+}(r, s)$ & $G^{+}(r, s)$    & $(4,4), (4,2p)$ or $ (2p,4)$& Definition \ref{IndDef}\\
				$\Gamma(r, s) $&$ H(r, s)$ &  $(p,4)$ or $(p,2q)$& Definition \ref{IndDef} \\
				
				$\Gamma^{+}(r, s) $ & $H^{+}(r, s)$ & $(4,4), (4,2p)$, $ (2p,4)$ or $(2p,2q)$& Definition \ref{IndDef}\\
				$\Gamma_{2}(r, s) $ & $G_{2}(r, s) $&  $(p,q)$& Definition \ref{IndDefDouble} \\
				\hline
			\end{tabular}\caption{The basic pairs of independent-cycle type. }\label{IndMainTable}
		\end{table}
	\end{Theorem}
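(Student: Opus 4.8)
The plan is to combine the known reduction with a finite case analysis over the families of \cite[Table 1]{al2017cycle}. By \cite[Theorem 2]{al2017cycle} every pair in $\OG(4)$ with independent cyclic normal quotients is a $G$-normal cover of a member of one of six explicit infinite families, and each member of those families lies in $\OG(4)$ and itself has independent cyclic normal quotients. Since a basic pair has only degenerate proper normal quotients, a basic pair of independent-cycle type cannot be a \emph{proper} $G$-normal cover of such a member, so it must itself be one; this already gives the ``only if'' direction up to deciding which members are basic. The content of Theorem \ref{IndMainTheorem} is therefore to determine, for each member $(\Gamma, G)$ of the six families (parametrised by $r$ and $s$), whether every $G$-normal quotient of $\Gamma$ by a nontrivial normal subgroup is degenerate.

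The first reduction is that it suffices to inspect the \emph{minimal} normal subgroups of $G$: if $N_0 \le N$ with $N_0$ minimal normal in $G$, then $\Gamma_N \cong (\Gamma_{N_0})_{N/N_0}$, and a normal quotient of a degenerate graph ($K_1$, $K_2$, or a cycle) is again degenerate; hence $(\Gamma,G)$ is basic if and only if $\Gamma_{N_0}$ is degenerate for every minimal normal subgroup $N_0$ of $G$. Reading off Definitions \ref{IndDef} and \ref{IndDefDouble}, each group $G$ here has a self-centralising abelian normal subgroup $A$ -- essentially $\Z_r \times \Z_s$, or an index-$2$ subgroup of it in the $H$- and $\Gamma^{+}$-type cases -- with small quotient $G/A$ acting on $A$ by inverting and sometimes interchanging the two cyclic factors. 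I would enumerate the minimal normal subgroups contained in $A$: for each prime $t \mid rs$ these are the smallest $G/A$-invariant subgroups of the $t$-primary part of $A$ (so a $\Z_t$ when $t$ divides exactly one of $r,s$, or when the factor-swap is not in play, and a $\Z_t \times \Z_t$ or a diagonal $\Z_t$ otherwise), and then check, using that $A$ is self-centralising, that $G$ has no minimal normal subgroup outside $A$.

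Next I would compute $\Gamma_{N_0}$ for each such $N_0$. Since each $\Gamma$ is built from the two cycles $\mathbf{C}_r$ and $\mathbf{C}_s$ -- as a direct product, an induced subgraph of one, or a standard double cover -- quotienting by a minimal $N_0 \le A$ lying ``in the first cyclic direction'' returns the analogous graph with parameter $r$ replaced by $r/t$, and likewise for the second direction or along a diagonal. The decisive point is that such a two-cycle graph is degenerate precisely when its surviving structure collapses, which happens only for certain small values of the reduced parameter, and these critical values differ between the direct-product graphs $\Gamma(r,s), \Gamma_2(r,s)$ and the double-cover/subgraph graphs $\Gamma^{+}(r,s)$ because of the extra layer in their construction. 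Imposing ``$\Gamma_{N_0}$ degenerate for every minimal $N_0$'' then forces each prime dividing $r$ (respectively $s$) to reduce $r$ (respectively $s$) to one of those critical values; combined with the parity and coprimality constraints built into Definitions \ref{IndDef} and \ref{IndDefDouble} (which govern when the factor-swap is available, which parameters must be even, and so on), this pins $(r,s)$ down to exactly the pairs listed against $G$, $G^{+}$, $H$, $H^{+}$ and $G_2$ in Table \ref{IndMainTable}. The converse runs the same computation backwards: for each tabulated line one lists its finitely many minimal normal subgroups, checks each quotient is a cycle or $K_2$, and cites \cite[Table 1]{al2017cycle} for the independent-cycle property.

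The main obstacle I expect is the careful bookkeeping of minimal normal subgroups across the five distinct group families. The distinctions $G$ versus $H$ and $\Gamma$ versus $\Gamma^{+}$ change which diagonal subgroups of $A$ are $G$-invariant, and hence which normal quotients actually occur, and they also change which reduced two-cycle graphs are cycles rather than genuine members of $\OG(4)$; getting these thresholds right for the double-cover graphs, handling the interaction of the prime $2$ with the built-in parity conditions, and making sure no ``accidental'' normal subgroup that appears only for special $(r,s)$ is overlooked, is where the real care is needed. The rest is a routine, if somewhat lengthy, enumeration.
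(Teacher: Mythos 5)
Your overall strategy is the same as the paper's: use \cite[Theorem 2]{al2017cycle} to place every basic pair of independent-cycle type inside the explicit families of Table \ref{IndTypesThmTable}, reduce ``basic'' to ``$\Gamma_{N_0}$ degenerate for every minimal normal subgroup $N_0$'' (the paper uses exactly this reduction for sufficiency, and an equivalent criterion -- two of the four neighbours of $(0,0)$ must fall into a common $L$-orbit -- for necessity), and then enumerate minimal normal subgroups family by family. Both reductions are sound, and the ``collapse threshold'' you describe ($r/t\le 2$ in the relevant direction) is the same arithmetic the paper extracts in Lemmas \ref{IndCyclicSubgroupsMu} and \ref{IndCyclicSubgroupsNu}.

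There is, however, a concrete flaw in the mechanism you propose for controlling the minimal normal subgroups, and it sits exactly where the paper has to work hardest. You assert that each group has a self-centralising abelian normal subgroup $A$ (``essentially $\Z_r\times\Z_s$ or an index-$2$ subgroup of it'') and that all minimal normal subgroups of $G$ therefore lie in $A$. This is true for $G(r,s)\cong D_r\times C_s$ and $G_2(p,q)\cong D_p\times D_q$, but fails for the $H$-type groups at small parameters: for $H(p,4)$ the natural choice $A=\langle\mu,\nu^2\rangle\cong C_p\times C_2$ has centraliser $\langle\mu,\nu^2,\tau\sigma\nu\rangle\cong C_p\times C_2^2$, strictly larger than $A$, and this centraliser has minimal normal subgroups such as $\langle\tau\sigma\nu\rangle$ lying outside $A$; one must then argue separately (as the paper does via $\tau$-conjugation and its Lemma \ref{Index2}) that these do not yield minimal normal subgroups of $H$. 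Similar issues arise for $G^+$ and $H^+$, which are index-$2$ subgroups whose minimal normal subgroups need not be normal in the overgroup, and for the central diagonal involutions such as $\langle\mu^p\nu^q\rangle\le G^+(2p,2q)$, which is precisely the subgroup that eliminates $(2p,2q)$ from Row 2 but survives in Row 4. You do flag these dangers, but the ``routine enumeration'' you defer is where the entire classification is decided, and the tool you name for it is not strong enough as stated; it needs to be replaced by a case-by-case determination of the minimal normal subgroups along the lines of the paper's Lemmas \ref{IndMinDrCr}, \ref{Index2} and \ref{HplusLemma}.
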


	Theorem \ref{IndMainTheorem}, when combined with \cite[Theorem 1.1]{cyclepaper}, provides a description of all the basic pairs of cycle type. 
		We note in particular that our classification of the basic pairs  of independent-cycle type is completely explicit. For all other types, apart from one explicit family of examples in~\cite[Theorem 1.1.1(a)]{cyclepaper}, the analysis shows that, for a basic pair $(\Gamma,G)$, the group $G$ has a unique minimal normal subgroup $N$, and tight upper bounds are obtained on the number of simple direct factors of $N$, \cite{al2015finite,poznanovic2019biquasiprimitive, cyclepaper}.  
	
	In the next section we set up our approach to proving Theorem \ref{IndMainTheorem} by outlining the necessary background theory. We then prove this theorem in two parts in Sections \ref{secIndNec} and \ref{secIndSuf}.
	
	\section{Preliminaries}
	All graphs in this paper are simple and finite. Given a graph $\Gamma$ we will let $V \Gamma$ and $E \Gamma$ denote the sets of vertices and edges of $\Gamma$ respectively. Given a vertex $\alpha \in V \Gamma$ we will let $\Gamma(\alpha)$ denote the set of neighbours of $\alpha$. An \textit{arc} of a graph $\Gamma$ is an ordered pair of adjacent vertices. We will let $A \Gamma$ denote the set of arcs of $\Gamma$. Given an arc $(\alpha, \beta) \in A \Gamma$, we will call $(\beta, \alpha)$ its \emph{reverse arc}. In particular, each edge $\{\alpha, \beta\} \in E \Gamma$ has two arcs associated with it, namely $(\alpha, \beta)$ and $(\beta, \alpha)$.
	For other basic graph-theoretic concepts, please refer to \cite{godsil2013algebraic}.
	
	\subsection{G-Oriented Graphs}
	For basic concepts of permutation group theory we refer the reader to \cite{praeger2018permutation}. Given a graph $\Gamma$ and a group $G \leq \Aut(G)$, we say that $\Gamma$ is \textit{$G$-vertex-transitive}, \textit{$G$-edge-transitive}, or \textit{$G$-arc-transitive} if $G$ is transitive on $V\Gamma, E\Gamma$, or $A\Gamma$ respectively.
	
	We will say that a graph $\Gamma$ is \textit{$G$-oriented}, with respect to a group $G\leq \Aut(\Gamma)$, if $G$ is transitive on $V\Gamma$ and $E\Gamma$ but is not transitive on $A\Gamma$.
	In the literature, such graphs are also called \textit{$G$-half-arc-transitive}, or are said to admit \textit{a half-transitive $G$-action}.
	
	Since $G$-oriented graphs are vertex-transitive they are necessarily regular and all of their connected components are isomorphic. Moreover, the valency of a $G$-oriented graph is necessarily even \cite{tutte1966connectivity}. It is easy to see that the connected 2-valent $G$-oriented graphs are oriented cycles. Hence the smallest nontrivial valency a $G$-oriented graph can have is 4. 
	
	We will let $\OG(4)$ denote the family of graph group pairs $(\Gamma, G)$ where $\Gamma$ is a connected 4-valent $G$-oriented graph. 
	For a detailed overview of the family $\OG(4)$, and of $G$-oriented graphs in general, please see \cite{al2015finite} or \cite[Section 2]{cyclepaper}.

	\subsection{Normal Quotients}
	Suppose that $(\Gamma, G) \in \OG(4)$ and let $N$ be a nontrivial normal subgroup of $G$.
	Let $\B_N$ be the partition of $V\Gamma$ into the $N$-orbits, that is, $\B_N = \{(\alpha^N)^g : g \in G\}$ where $\alpha$ is an arbitrary fixed vertex of $\Gamma$, and $\alpha^N$ denotes the $N$-orbit containing $\alpha$. In this case, $\B_N$ is a $G$-invariant partition of $V\Gamma$, so we may define the \textit{$G$-normal-quotient graph} of $\Gamma$ with respect to $N$, denoted $\Gamma_N$. The vertex set of $\Gamma_N$ is the set $\B_N$ of $N$-orbits, and there is an edge between two
	vertices in $\Gamma_N$ if and only if there is at least one edge of $\Gamma$ between vertices from the corresponding $N$-orbits.

	The group $G$ then induces a group $G_N$ of automorphisms of $\Gamma_N$.
	Specifically, $G_N = G/K$, where $K$ is the kernel of the $G$-action on $\Gamma_N$. By definition, $N\leq K$ and since $K$ fixes all $N$-orbits setwise, it follows that the $K$-orbits are the same as the $N$-orbits, so $\Gamma_K=\Gamma_N$. However, $K$ may be strictly larger than $N$.  
	
	It was shown in \cite[Theorem 1.1]{al2015finite} 
	that for any $(\Gamma, G) \in \mathcal{O} \mathcal{G}(4)$, and any nontrivial normal subgroup $N$ of $G$, either $\left(\Gamma_{N}, G_{N}\right)$ is also in $\mathcal{O} \mathcal{G}(4)$, or $\Gamma_{N}$ is isomorphic to $K_{1}$, $K_{2}$ or a cycle $\mathbf{C}_r$, for some $r \geq 3$. 
	Note that if $(\Gamma_N, G_N)$ is itself a member of $\OG(4)$, that is, if $\Gamma_N$ is a 4-valent $G_N$-oriented graph, then $\Gamma$ is said to be a \textit{$G$-normal cover of $\Gamma_N$}.
	
	Following the authors of \cite{al2015finite}, we  say that a pair $(\Gamma_N, G_N)$ is \textit{degenerate} if $\Gamma_N$ is isomorphic to one of $K_1$, $K_2$ or $\mathbf{C}_r$ for some $r \geq 3$. We also define a pair ($\Gamma, G) \in \OG(4)$ to be \textit{basic} if $(\Gamma_N, G_N)$ is degenerate for every nontrivial normal subgroup $N$ of $G$. 
	As mentioned in the introduction, the basic pairs can be divided into three types (quasiprimitive, biquasiprimitive and cycle type) depending on their possible degenerate normal quotient graphs, and those of the first two types have been studied in \cite{al2015finite} and \cite{poznanovic2021four} respectively. Here, we will only be concerned with the basic pairs of cycle type. 
	
	Recall that a pair $(\Gamma, G)\in \OG(4)$ is basic of cycle type if it has at least one $G$-normal quotient which is isomorphic to a cycle graph $\mathbf{C}_r$ for some $r \geq 3$. Suppose that $(\Gamma, G)$ is such a pair with cyclic normal quotient $\Gamma_N \cong \mathbf{C}_r$, for some normal subgroup $N$ of $G$ and $r\geq 3$.
	In such a case, we will always let $\widetilde{N}$ denote the largest normal subgroup of $G$ (containing $N$) which fixes each $N$-orbit setwise, that is, $\widetilde{N}$ will always refer to the kernel of the $G$-action on the set of $N$-orbits in $V\Gamma$. Since $N \leq \widetilde{N}$ and these groups have the same orbits in $V\Gamma$, it will always be the case that $\Gamma_N = \Gamma_{\widetilde{N}}$.
	
	Since $\Gamma_N$ is a cyclic normal quotient of $\Gamma$, it follows 
		from the fundamental analysis in \cite[Theorem 1.1]{al2015finite} that either $\Gamma_N$ is an oriented cycle and the induced group $\widetilde{G} := G/\widetilde{N}$ is its full automorphism group $C_r$, or  $\Gamma_N$ is an unoriented cycle and the induced group $\widetilde{G} := G/\widetilde{N}$ acts arc-transitively, and again is the full automorphism group $D_r$, where $D_r$ denotes the dihedral group of order $2r$. 
	If the normal quotient $(\Gamma_N, G/\widetilde{N}$) of a pair $(\Gamma, G) \in \OG(4)$ is isomorphic to $(\mathbf{C}_r, C_r)$ for some $r\geq 3$, then we will say that the quotient is \textit{$G$-oriented} (or simply that it is oriented). If $(\Gamma_N, G/\widetilde{N}$) is isomorphic to $(\mathbf{C}_r, D_r)$ then we will say that the quotient is \textit{$G$-unoriented} (or simply that it is unoriented).
	
	Of course, if $\Gamma_N$ is a $G$-oriented cyclic normal quotient, then the stabiliser $G_\alpha$ of any vertex will fix the $N$-orbit containing $\alpha$, and so $G_\alpha \leq \widetilde{N}$. On the other hand, if $\Gamma_N$ is $G$-unoriented then any vertex $\alpha$ will have one out-neighbour (and one in-neighbour) in each of the two adjacent $N$-orbits, hence any non-identity automorphism which fixes $\alpha$ must swap at least two $N$-orbits, in particular $G_\alpha \cap \widetilde{N} = 1$. 
	
	Two cyclic normal quotients $\Gamma_M$ and $\Gamma_N$ of $(\Gamma, G) \in \mathcal{O G}(4)$ are \textit{independent} if the normal quotient $\Gamma_K$, where $K=\widetilde{M} \cap \widetilde{N}$, is not a cycle. Thus if $(\Gamma, G) \in \mathcal{O} \mathcal{G}(4)$ is basic of cycle type and $\Gamma_M$ and $\Gamma_N$ are independent cyclic normal quotients of $\Gamma$, then $K=\widetilde{N} \cap \widetilde{M}=1$.
	
	If a basic pair does not have independent cyclic normal quotients then all of its normal quotients must be oriented or all of its normal quotients must be unoriented (see the discussion in the introduction), and such basic pairs have been analysed in \cite{cyclepaper}. The purpose of this paper is to identify those basic pairs $(\Gamma, G) \in \OG(4)$ of independent-cycle type, that is those basic pairs which have independent cyclic normal quotients. 
	
	\subsection{Independent Cyclic Normal Quotients} 
	A description of the pairs $(\Gamma, G) \in \OG(4)$ having independent cyclic quotients was provided in \cite{al2017cycle} via the following theorem.
	
	\begin{Theorem} \label{IndTypesThm} \emph{\cite[Theorem 2]{al2017cycle}}
		Let $(\Gamma, G) \in \mathcal{O G}(4)$, let $\alpha$ be a vertex, and suppose that $(\Gamma, G)$ has independent cyclic normal quotients $\Gamma_{N} \cong \mathbf{C}_{r}$ and $\Gamma_{M} \cong \mathbf{C}_{s}$, where $r \geq 3, s \geq 3$. Then $G_{\alpha} \cong C_{2}$, and the following hold:
		\begin{enumerate}[(a)]
			\item  at least one of $\Gamma_{N}, \Gamma_{M}$ is $G$-unoriented, say $\Gamma_{N}$ is G-unoriented;
			\item $(\Gamma, G) \in \mathcal{O G}(4)$ is a normal cover of $(\overline{\Gamma}, \overline{G}) \in \mathcal{O G}(4)$, which has independent cyclic normal quotients $\overline{\Gamma}_{\overline{N}} \cong \mathbf{C}_{r}$ and $\overline{\Gamma}_{\overline{M}} \cong \mathbf{C}_{s}$ such that $\overline{N} \cap \overline{M}=1$;
			\item$\overline{\Gamma}, \overline{G}$ are as in one of the lines of Table $\ref{IndTypesThmTable}$, and $\overline{\Gamma}_{\overline{M}}$ (and $\Gamma_{M}$) are $G$-oriented if and only if the entry in column $3$ is `Yes'.
		\end{enumerate}
		\begin{table}[H] 
			\centering
			
			\begin{tabular} {llcl} 
				\hline $\overline{\Gamma}$ & $\overline{G} $& Is $\overline{\Gamma}_{\overline{M}}$ \, $G$-oriented?& Conditions on $r, s$  \\
				\hline $\Gamma(r, s) $& $G(r, s)$ & Yes & At least one odd \\
				$	\Gamma^{+}(r, s)$ & $G^{+}(r, s)$ &  Yes & Both even  \\
				$\Gamma(r, s) $&$ H(r, s)$ &  No  & $r $ odd,  $s$ even  \\
				$\Gamma(s, r) $& $H(s, r) $& No  & $r$ even, $s$ odd  \\
				$\Gamma^{+}(r, s) $ & $H^{+}(r, s)$ & No & Both even \\
				$\Gamma_{2}(r, s) $ & $G_{2}(r, s) $&  No  &  Both odd \\
				\hline
			\end{tabular}\caption{Table for Theorem \ref{IndTypesThm}.} \label{IndTypesThmTable}
		\end{table}
	\end{Theorem}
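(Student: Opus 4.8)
The plan is to run everything through the two kernels $\widetilde{N},\widetilde{M}$ and the known cyclic/dihedral structure of the induced groups on the quotient cycles. Throughout write $G_N=G/\widetilde{N}$ and $G_M=G/\widetilde{M}$; by the dichotomy recalled just before the statement, $G_N\cong C_r$ or $D_r$ according as $\Gamma_N$ is $G$-oriented or $G$-unoriented, and likewise $G_M\cong C_s$ or $D_s$. The first thing I would record is an even-stabiliser lemma: for every $(\Delta,H)\in\OG(4)$ one has $|H_\delta|=2|H_e|$ (counting vertex--edge incidences, using $|E\Delta|=2|V\Delta|$), so $|H_\delta|\ge 2$ is even. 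Next I would set $K=\widetilde{N}\cap\widetilde{M}$ and observe that $K$ is \emph{exactly} the kernel of $G$ on $\Gamma_K$: any $g$ fixing every $K$-orbit setwise fixes every union of $K$-orbits, in particular every $N$-orbit and every $M$-orbit, so $g\in\widetilde{N}\cap\widetilde{M}=K$. Hence $G/K$ embeds subdirectly in $G_N\times G_M$.

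For part (a), suppose both quotients were $G$-oriented. Then $G_N\times G_M\cong C_r\times C_s$ is abelian, so $G/K$ and the group $G_K$ induced on $\Gamma_K$ are abelian. If $(\Gamma_K,G_K)$ lay in $\OG(4)$ then $G_K$ would be an abelian vertex-transitive group, hence regular, forcing $(G_K)_\alpha=1$ and contradicting the even-stabiliser lemma; so by \cite[Theorem 1.1]{al2015finite} the graph $\Gamma_K$ is $K_1$, $K_2$ or $\mathbf{C}_t$. The first two are impossible since $\Gamma_K$ covers $\mathbf{C}_r$ with $r\ge 3$, so $\Gamma_K$ is a cycle, contradicting independence. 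Thus at least one quotient, say $\Gamma_N$, is $G$-unoriented. Since $\Gamma_N$ is unoriented, $G_\alpha\cap\widetilde{N}=1$, hence $G_\alpha\cap K=1$ and $G_\alpha$ embeds faithfully in $G_N\times G_M$, landing in the product of the stabilisers of $\alpha^N$ and $\alpha^M$, each trivial (cyclic factor) or $C_2$ (dihedral factor). I would then force this image to be diagonal: if some $g\in G_\alpha$ projected nontrivially to $G_N$ but trivially to $G_M$, then $g\in\widetilde{M}$, and if $\Gamma_M$ is also unoriented this forces $g\in G_\alpha\cap\widetilde{M}=1$, while if $\Gamma_M$ is oriented the second projection is automatically trivial. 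Either way the image is a diagonal $C_2$, and with the even-stabiliser lemma this gives $G_\alpha\cong C_2$.

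Part (b) then follows by passing to $(\Gamma_K,G_K)$. As $\Gamma_K$ is not a cycle (independence) and is neither $K_1$ nor $K_2$ (it covers $\mathbf{C}_r$), \cite[Theorem 1.1]{al2015finite} places $(\Gamma_K,G_K)$ in $\OG(4)$, so $\Gamma$ is a $G$-normal cover of $\overline{\Gamma}:=\Gamma_K$ with $\overline{G}:=G_K$. Writing $\overline{N}=\widetilde{N}/K$ and $\overline{M}=\widetilde{M}/K$ gives $\overline{N}\cap\overline{M}=(\widetilde{N}\cap\widetilde{M})/K=1$, while $\overline{\Gamma}_{\overline{N}}=\Gamma_N\cong\mathbf{C}_r$ and $\overline{\Gamma}_{\overline{M}}=\Gamma_M\cong\mathbf{C}_s$ retain their orientation types. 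This reduces the problem to the case $K=1$, i.e.\ $\widetilde{N}\cap\widetilde{M}=1$ and $G\le G_N\times G_M$ subdirectly.

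Part (c) is where I expect the real work, and it is the main obstacle. Assuming $K=1$, the normality of $\widetilde{N}$ together with $\widetilde{N}\cap\widetilde{M}=1$ yields an embedding $\widetilde{N}\hookrightarrow G/\widetilde{M}=G_M$ and dually $\widetilde{M}\hookrightarrow G_N$, while $|G_\alpha|=2$ gives $|G|=2|V\Gamma|$ and $|V\Gamma|=r\,|\alpha^{\widetilde{N}}|=s\,|\alpha^{\widetilde{M}}|$. The remaining task is a finite classification of the subdirect subgroups $G\le G_N\times G_M$ that act half-arc-transitively with $G_\alpha\cong C_2$, induce the prescribed orientation type on each factor, and, together with a choice of the two out-neighbours of $\alpha$, produce a connected $4$-valent graph. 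The parities of $r$ and $s$ are decisive, since they govern whether $D_r$ (resp.\ $D_s$) has a central involution and how many conjugacy classes of reflections it has, and hence whether the diagonal involution generating $G_\alpha$ and a consistent connection set can be chosen. I would organise the analysis by the possibilities for $(G_N,G_M)$: the cyclic-times-dihedral case yields the families $\Gamma(r,s)/G(r,s)$ (at least one of $r,s$ odd) and the both-even family $\Gamma^{+}(r,s)/G^{+}(r,s)$, while the dihedral-times-dihedral case splits by parity into $\Gamma(r,s)/H(r,s)$ (mixed parity), $\Gamma^{+}(r,s)/H^{+}(r,s)$ (both even), and $\Gamma_2(r,s)/G_2(r,s)$ (both odd), the last arising when the diagonal involution forces a standard-double-cover doubling. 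In each case I would check which parities admit a valid $G$, identify the resulting graph with one of $\Gamma(r,s),\Gamma^{+}(r,s),\Gamma_2(r,s)$, and finally verify the converse that every listed pair lies in $\OG(4)$ with the claimed independent cyclic quotients, thereby filling out the table.
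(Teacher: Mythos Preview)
The paper does not prove this theorem: it is quoted verbatim as \cite[Theorem~2]{al2017cycle} and used as an input to the rest of the argument, so there is no ``paper's own proof'' to compare against.

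That said, your outline for (a), the bound $G_\alpha\cong C_2$, and (b) is sound and is in fact essentially how the result is obtained in \cite{al2017cycle}: the key moves are exactly the subdirect embedding $G/K\hookrightarrow G_N\times G_M$, the abelian contradiction when both quotients are oriented, and the diagonal argument pinning $G_\alpha$ inside $(G_N)_{\alpha^N}\times(G_M)_{\alpha^M}$. One small point to tighten: when both quotients are unoriented you argued that $G_\alpha$ meets neither factor nontrivially, but you should also rule out the case that $G_\alpha$ maps onto all of $C_2\times C_2$; your own observations that $G_\alpha\cap\widetilde{N}=G_\alpha\cap\widetilde{M}=1$ already do this, so just state it explicitly.

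For part (c), however, what you have written is a plan rather than a proof. The actual classification in \cite{al2017cycle} requires a nontrivial case analysis: one has to pin down $\widetilde{N}$ and $\widetilde{M}$ as explicit subgroups of $G_N\times G_M$ (using semiregularity of $\widetilde{N}$ on $V\Gamma$ when $\Gamma_N$ is unoriented, and the containment $G_\alpha\le\widetilde{M}$ when $\Gamma_M$ is oriented), identify $V\Gamma$ with a concrete coset or product space, and then check connectedness and $4$-valency to see which parity constraints on $(r,s)$ force the passage to $\Gamma^{+}$ or to the standard double cover $\Gamma_2$. Your final paragraph names the right case split but does not execute any of it, and in particular does not explain \emph{why} the both-odd, both-unoriented case forces the double cover rather than $\Gamma(r,s)$ itself. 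So (c) as written is a genuine gap: the architecture is right, but the content is missing.
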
 
	
	Before progressing further, we will give the definitions of the graphs and groups appearing in Table $\ref{IndTypesThmTable}$ (and also in Table~\ref{IndMainTable} for Theorem \ref{IndMainTheorem}) as they are stated in \cite{al2017cycle}.
	
	\begin{Definition}\label{IndDef} \emph{\cite[Definition 2.1]{al2017cycle}}
		Let $r, s$ be integers, each at least $3$. Define the undirected graph $\Gamma(r, s)$ to have vertex set $X:=\mathbb{Z}_{r} \times \mathbb{Z}_{s}$, such that a vertex $(i, j) \in X$ is joined by an edge to each of the four vertices $(i \pm 1, j \pm 1) .$ Also, if $r, s$ are both even define
		\begin{equation}\label{X+}
			X^{+}:=\{(i, j) \in X \mid i, j \text { of the same parity }\}
		\end{equation}
		and let $\Gamma^{+}(r, s)=\left[X^{+}\right]$, the induced subgraph.  Define the following permutations of $X$, for $(i, j) \in X$,
		$$
		\begin{array}{ll}
			\mu:(i, j) \mapsto(i+1, j), & \nu:(i, j) \mapsto(i, j+1), \\
			\sigma:(i, j) \mapsto(-i, j), & \tau:(i, j) \mapsto(-i,-j),
		\end{array}
		$$
		and define the groups as in Table $\ref{IndDefTable}$, where in lines $3$ and $4$ ($r, s$ both even), we identify $\mu, \nu, \sigma, \tau$ with their restrictions to $X^{+}$, and consider the subgroups $G^{+}(r, s)$ and $H^{+}(r, s)$ acting on $X^{+}$.
		
		\begin{table}[H]
			\centering
			\begin{tabular}{llll}
				
				\hline $\Gamma$ & $G$ & \text {Generators for } $G$ & Conditions on $r, s $\\
				\hline $\Gamma(r, s)$ & $G(r, s)$ & $\mu, \nu, \sigma $& - \\
				$\Gamma(r, s) $&  $H(r, s)$ & $\mu, \sigma \nu, \tau$ & $s \hbox{ even} $ \\
				$\Gamma^{+}(r, s)$ & $G^{+}(r, s)$ & $\mu^{2}, \mu \nu, \sigma $&  Both even \\
				$	\Gamma^{+}(r, s) $& $H^{+}(r, s) $&$ \mu^{2}, \sigma \mu \nu, \tau $& Both even  \\
				\hline
				
			\end{tabular}	\caption{Table for Definition \ref{IndDef}.}\label{IndDefTable}
		\end{table}
		Also let 
		\begin{enumerate}
			\item[(a)]  $\widetilde{M}=\langle\mu, \sigma\rangle \cong D_{r}, $ $ M=\langle\mu\rangle \cong C_{r},$ $ N=\langle \nu\rangle \cong C_{s}$, and $N^{\#}=\left\langle \nu^{2}, \tau \sigma \nu\right\rangle$, and note that $G(r, s)=\widetilde{M} \times N$ and $H(r, s)=\left(M \times N^{\#}\right) \cdot\langle\tau\rangle$ (with $s$ even; also note that $(\sigma \nu)^{2}=\nu^{2})$;
			\item[(b)]  $M_{t}=\left\langle\mu^{t}\right\rangle$ for $t \mid r$, and $N_{t}=\left\langle\nu^{t}\right\rangle$ for $t \mid s$. If $r$ and $s$ are both even, we also consider the following subgroups restricted to their actions on $X^+$: 
			\[
			\widetilde{M}^{+}=\left\langle\mu^{2}, \sigma\right\rangle \cong D_{r},\  M^{+}=M_{2} \cong C_{r / 2},\ \mbox{and}\ N^{+}=N_{2} \cong C_{s / 2}.
			\]
		\end{enumerate}

	\end{Definition} 
	
	We will need to consider the standard double covers (sometimes called canonical double covers)  of the graphs $\Gamma(r,s)$ defined in Definition \ref{IndDef}.
	The \textit{standard double cover} of a graph $\Gamma$ with vertex set $X$ is the graph $\Gamma_{2}$ with vertex set $X_{2}=\left\{x_{\delta} \mid x \in X, \delta \in \mathbb{Z}_{2}\right\}$ such that $\left\{x_{\delta}, y_{\delta^{\prime}}\right\}$ is an edge if and only if $\delta \neq \delta^{\prime}$ and $\{x, y\}$ is an edge of $\Gamma$.
	Note that $\Gamma_{2}$ has the same valency as $\Gamma$ and twice the number of vertices.
	
	\begin{Definition}\label{IndDefDouble}
		\emph{\cite[Construction 2.10]{al2017cycle}} Let $r, s$ be positive integers, with $r, s \geq 3$, and let $\Gamma_{2}(r, s)$ be the standard double cover of the graph $\Gamma(r, s)$ of Definition $\ref{IndDef}$, so $\Gamma_{2}(r, s)$ has vertex set $X_{2}=\left\{x_{\delta} \mid x \in X, \delta \in \mathbb{Z}_{2}\right\}$ where $X=\mathbb{Z}_{r} \times \mathbb{Z}_{s}$, the vertex set of $\Gamma(r,s)$. Note that a vertex $(i,j)_\delta \in X_2$ is adjacent to each of the four vertices $(i\pm1, j \pm1)_{\delta^{'}}$ where $\delta \neq \delta^{'}$.
		
		
		We extend the automorphisms defined in Definition $\ref{IndDef}$ to maps on $X_{2}$ as follows. For $(i, j)_{\delta} \in X_{2}$
		$$
		\begin{array}{ll}
			\mu:(i, j)_{\delta} \mapsto(i+1, j)_{\delta}, & \nu:(i, j)_{\delta} \mapsto(i, j+1)_{\delta}, \\
			\sigma:(i, j)_{\delta} \mapsto(i,-j)_{\delta+1}, & \tau:(i, j)_{\delta} \mapsto(-i,-j)_{\delta}
		\end{array}
		$$
		and let $G_{2}(r, s)=\langle\mu, \nu, \sigma, \tau\rangle=\widehat{M} \times \widehat{N}$, where $\widehat{M}=\langle\mu, \sigma \tau\rangle \cong D_{r}$, and $\widehat{N}=$ $\langle \nu, \sigma\rangle \cong D_{s}$.
		
	\end{Definition}
	
	By Theorem \ref{IndTypesThm}, if a pair $(\Gamma, G) \in \OG(4)$ has independent cyclic normal quotients, then $(\Gamma, G)$ is a normal cover of a pair $(\overline{\Gamma}, \overline{G}) \in \mathcal{O G}(4)$  appearing in Table \ref{IndTypesThmTable}, and so every basic pair of independent-cycle type appears in Table \ref{IndTypesThmTable}, though of course not all such pairs are basic. We now make some preliminary remarks about our proof strategy. 
	
	\begin{Remark}\label{rem1}
		{\rm
			Since every basic pair of independent-cycle type appears in Table \ref{IndTypesThmTable}, all we need to do in order to determine these basic pairs (and hence prove Theorem \ref{IndMainTheorem}), is to decide which of the pairs in Table \ref{IndTypesThmTable} are basic. 
			For the purpose of doing this, we in fact only need to determine which of the pairs appearing in Rows 1, 2, 3, 5, 6 of Table \ref{IndTypesThmTable} are basic. The reason for this is the following. 
			
			If $(\Gamma, G) \in \OG(4)$ is basic of independent-cycle type having independent $G$-unoriented cyclic normal quotients $\Gamma_{N} \cong \mathbf{C}_{r}$ and $\Gamma_{M} \cong \mathbf{C}_{s}$, with $r \geq 3$ and $ s \geq 3$ having different parities, then by Theorem \ref{IndTypesThm},  $(\Gamma, G)$ is as described in Rows 3 or 4 of Table \ref{IndTypesThmTable}. Since both quotients are $G$-unoriented,  we could interchange $N$ and $M$, and $r$ and $s$, if necessary, and assume that $r$ is odd and hence that $(\Gamma, G)$ appears in Row 3 of Table \ref{IndTypesThmTable}. 
			
			Hence without loss of generality,  all basic pairs $(\Gamma, G)\in \OG(4)$ of independent-cycle type belong to one of the five families given in Table \ref{IndTypesTable} below.  Note that the five rows of this table correspond exactly to the five rows of Table \ref{IndMainTable} (and of Table \ref{IndTypesThmTable} apart from row 4). The elements in the column `Generators' of  Table \ref{IndTypesTable} form a generating set for $G$ as given in Table~\ref{IndDefTable} and Definition~\ref{IndDefDouble}.
			The subgroups listed in the column labelled `Normal Subgroups' in  Table \ref{IndTypesTable} are notable normal subgroups of the group $G$ which we will use in later arguments. Note also that for all groups $G$ listed in Table \ref{IndTypesTable}, all subgroups of the groups in the `Normal Subgroups' column are also normal in $G$. This is easily checked by noting the following relations between the  various generating elements of these groups: 
			$$\mu\nu = \nu\mu, \hspace{15mm}
			\mu^\sigma = \mu^{\mp1}, \hspace {5mm}  \nu^{\sigma} = \nu^{\pm1},  \hspace{15mm}
			\mu^\tau = \mu^{-1}, \hspace {5mm}  \nu^{\tau} = \nu^{-1},$$
			where the result of conjugating by $\sigma$ depends on  whether or not $G = G_2(r,s)$.     
		}
	\end{Remark}

	\begin{table}[H] 
		\centering
		\begin{tabular} {llccc} 
			\hline ${\Gamma}$ & ${G} $& Conditions on $r, s$  &Generators & Normal Subgroups\\
			\hline $\Gamma(r, s) $& $G(r, s)$  & At least one odd &$\mu, \nu, \sigma $ & $\langle \mu \rangle, \langle \nu \rangle$\\
			$	\Gamma^{+}(r, s)$ & $G^{+}(r, s)$ & Both even  &$\mu^{2}, \mu \nu, \sigma $& $\langle \mu^2 \rangle, \langle \nu^2 \rangle$\\
			$\Gamma(r, s) $&$ H(r, s)$ &   $r $ odd,  $s$ even  &$\mu, \sigma \nu, \tau$& $\langle \mu \rangle, \langle \nu^2 \rangle$\\
			$\Gamma^{+}(r, s) $ & $H^{+}(r, s)$  & Both even &$ \mu^{2}, \sigma \mu \nu, \tau $ &$\langle \mu^2 \rangle, \langle \nu^2 \rangle$\\
			$\Gamma_{2}(r, s) $ & $G_{2}(r, s) $ &  Both odd &$\mu, \nu, \sigma, \tau $&$\langle \mu \rangle, \langle \nu \rangle$\\
			\hline
		\end{tabular}\caption{The five possible kinds of basic pairs of independent-cycle type.} \label{IndTypesTable}
	\end{table}
	
	We may now prove Theorem \ref{IndMainTheorem} by determining for each row of Table \ref{IndTypesTable}, the values of $(r,s)$ which produce a basic pair $(\Gamma, G)\in \OG(4)$.
	More precisely, we prove Theorem \ref{IndMainTheorem} over the next two sections as follows. First, we show that if $(\Gamma, G) \in \OG(4)$ appears in Table \ref{IndTypesTable} and is basic, then $(r,s)$ has one of the values appearing in the corresponding row of Table \ref{IndMainTable}. This therefore is a necessary condition and we prove it in Section \ref{secIndNec}. Then in Section \ref{secIndSuf}, we show that for each of the values of $(r,s)$ appearing in Table \ref{IndMainTable}, the pair $(\Gamma, G)$ is in fact basic.
	
	\section{Proof of Theorem \ref{IndMainTheorem}: Necessary Condition}\label{secIndNec}
	
		For the pairs $(\Gamma, G) \in \OG(4)$ with independent cyclic normal quotients, that is, those appearing in Table \ref{IndTypesTable}, we begin by restricting the possible values of $(r,s)$ which can occur when the pair is basic.  
	We obtain precisely the values of $(r,s)$ given in Theorem \ref{IndMainTheorem}. The following two lemmas identify these values for $r$ (Lemma \ref{IndCyclicSubgroupsMu}), and $s$ (Lemma \ref{IndCyclicSubgroupsNu}).

	\begin{Lemma}\label{IndCyclicSubgroupsMu}
		Suppose that $(\Gamma, G)\in \OG(4)$ is as described in one of the rows of Table $\ref{IndTypesTable}$  with appropriate $r,s \geq 3$. Suppose further that $(\Gamma, G)$ is basic of cycle type. 
		Then the following hold:
		\begin{enumerate}[(a)]
			\item 	If $(\Gamma, G)$ is as in Row $1, 3$, or $5$ of Table $\ref{IndTypesTable}$ then $r \in \{4,p\}$ for some odd prime $p$.
			\item If $(\Gamma, G)$ is as in Row $2$ or $4$ of Table $\ref{IndTypesTable}$ then $r \in \{4,2p\}$ for some odd prime $p$.
		\end{enumerate}
	\end{Lemma}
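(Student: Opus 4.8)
The strategy is: whenever $r$ fails to have the claimed form, exhibit an explicit nontrivial normal subgroup of $G$ whose normal quotient is $4$-valent. Such a quotient has valency $>2$, so it is not isomorphic to $K_1$, $K_2$ or a cycle, and hence by \cite[Theorem 1.1]{al2015finite} it lies in $\OG(4)$ and is not degenerate, contradicting the hypothesis that $(\Gamma,G)$ is basic. The normal subgroups we use are generated by powers of $\mu$: in Rows $1$, $3$, $5$ they are the subgroups $M_t=\langle\mu^t\rangle$ ($t\mid r$) of $M=\langle\mu\rangle\cong C_r$, and in Rows $2$, $4$ (where $r,s$ are both even and $\mu\notin G$) they are the subgroups $\langle\mu^{2t}\rangle$ ($t\mid r/2$) of $M^+=\langle\mu^2\rangle\cong C_{r/2}$ (see Definition \ref{IndDef}); by Remark \ref{rem1} all of these are normal in $G$.

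For part (a), since $\mu^t$ adds $t$ to the first coordinate of the vertex set $\Z_r\times\Z_s$ of $\Gamma(r,s)$ (resp.\ of $X_2$ for $\Gamma_2(r,s)$ in Row $5$), the $\langle\mu^t\rangle$-orbits are indexed by $\Z_t\times\Z_s$ (with an extra $\Z_2$ factor for $\Gamma_2(r,s)$), and in the quotient $(\bar{i},j)$ is joined to $(\bar{i}\pm1,j\pm1)$. I would check that these four candidate neighbours are pairwise distinct, and distinct from $(\bar{i},j)$, exactly when $t\ge 3$ (using $s\ge 3$); thus whenever $r$ has a divisor $t$ with $3\le t<r$ the quotient is connected and $4$-valent, so $(\Gamma,G)$ is not basic, while for $t=1$ (and, when $r$ is even, $t=2$) the quotient is a cycle and hence degenerate. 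Therefore a basic pair forces $r$ to have no divisor in the range $[3,r)$, and an elementary argument shows this means $r$ is prime or $r=4$; together with $r\ge 3$ and the fact that $r$ is odd in Rows $3$ and $5$, this gives $r\in\{4,p\}$ for an odd prime $p$.

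Part (b) runs the same way with $\Gamma^+(r,s)$. Here $\mu^{2t}$ adds $2t$ to the first coordinate and preserves $X^+$, so for $t\mid r/2$ with $1<t<r/2$ the quotient $\Gamma^+(r,s)_{\langle\mu^{2t}\rangle}$ has vertices indexed by the pairs $(\bar{i},j)$ with $\bar{i}\in\Z_{2t}$, $j\in\Z_s$, $\bar{i}\equiv j\pmod 2$, and again $(\bar{i},j)$ is joined to the (now pairwise distinct) vertices $(\bar{i}\pm1,j\pm1)$. So any proper nontrivial subgroup of $M^+$ yields a $4$-valent quotient; hence if $(\Gamma,G)$ is basic then $C_{r/2}$ has no proper nontrivial subgroup, i.e.\ $r/2$ is prime (note $r/2\ge 2$), so $r=4$ or $r=2p$ with $p$ an odd prime. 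In both parts I would also note, for completeness, that the subgroups not covered above --- $M=\langle\mu\rangle$ and, when $r=4$, $\langle\mu^2\rangle$ in Rows $1,3,5$; $M^+$ itself when $r=4$ or $2p$ in Rows $2,4$ --- have cyclic (hence degenerate) quotients $\mathbf{C}_s$ (or $\mathbf{C}_{2s}$), confirming that the listed values of $r$ are compatible with being basic; sufficiency is established in Section \ref{secIndSuf}.

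The only genuine difficulty I anticipate is bookkeeping: correctly describing the vertex sets of the quotient graphs --- especially for $\Gamma^+(r,s)$, where the parity constraint $i\equiv j\pmod 2$ has to be tracked through the $\mu^2$-translation --- and carefully isolating the small cases $t\in\{1,2\}$ and $r\in\{3,4\}$ so as to pin down exactly which divisor structure is excluded. Once the quotients are identified, the non-degeneracy of a $4$-valent quotient and the number-theoretic step (``no intermediate divisor'' $\Rightarrow$ prime or $4$) are immediate.
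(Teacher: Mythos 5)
Your proposal is correct and is essentially the paper's own argument in contrapositive form: the paper fixes a proper nontrivial $L\le\langle\mu\rangle$ (resp.\ $\le\langle\mu^2\rangle$), notes that a cyclic quotient forces $(-1,1)$ and $(1,1)$ into the same $L$-orbit and hence $\mu^2\in L$ by semiregularity, whereas you observe directly that $\mu^2\notin\langle\mu^t\rangle$ (i.e.\ $t\ge 3$) keeps the four neighbours of $(0,0)$ in distinct orbits and so yields a non-degenerate $4$-valent quotient. The normal subgroups used, the neighbour computation, and the final number-theoretic step (``no intermediate divisor'' giving $r\in\{4,p\}$, resp.\ $r/2$ prime) coincide with the paper's.
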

	\begin{proof}
		(a) \quad Suppose that $(\Gamma, G)$ is as in Rows 1,  3, or 5. Then  $M:= \langle \mu \rangle\cong C_r$ is a normal subgroup of $G$, and hence also all subgroups of $M$ are normal in $G$. If $r$ is a prime then $r$ is odd, since $r\geq3$, and  is one of the possibilities in part (a). Suppose now that $r$ is not a prime. Then $M$ contains a proper nontrivial subgroup $L$. Since $L$ is a normal subgroup of $G$ contained in $M$, and since $M$ has at least three orbits on $\Gamma$, it follows that $\Gamma_L$ is a non-trivial proper $G$-normal quotient, and hence is a cycle, since $(\Gamma,G)$ is basic.
		
		Next, notice that the vertex $(0,0)$ is adjacent to $(-1,-1), (1,-1), (-1,1) $ and $(1,1)$ (or if $\Gamma$ is as in Row 5, then $(0,0)_0$ is adjacent to $(-1,-1)_1,$ $ (1,-1)_1, (-1,1)_1 $ and $(1,1)_1$). Thus exactly two of these neighbours lie in one $L$-orbit and the other two lie in another $L$-orbit. Since $\mu : (i,j) \mapsto (i+1,j)$  (respectively, $\mu:(i, j)_{\delta} \mapsto(i+1, j)_{\delta}$) has no effect on the second coordinate of each vertex, it follows that $(-1,1) $ and $(1,1)$ (respectively $(-1,1)_1 $ and $(1,1)_1$) lie in the same $L$-orbit. 
		
		
		Since $M$ is semiregular on $V\Gamma$, this implies that $\mu^2$, and hence $\langle \mu^2 \rangle$, lies in $L$. Then since $L \neq M$, it follows that $L = \langle \mu^2 \rangle$. Hence $\mu$ has even order and so $r= 2k$ for some integer $k>1$, and $ \langle \mu^2 \rangle$ has order $k$. Now consider the subgroup $K:= \langle \mu^k\rangle$ of $M$ of order 2. By the same reasoning  $\Gamma_K$ is a cycle and $K$ must contain $\mu^2$, implying that $k = 2$ and $r =4$. This proves part (a).
		
		(b)\quad Suppose now that $(\Gamma, G)$ is as in Row 2 or 4. Then $r$ is even and $M_2 := \langle \mu^2 \rangle$ is normal in $G$, as are each of its subgroups. Also, since $r\geq3$, the subgroup $M_2\ne 1$. 
		Suppose that $M_2$ is not simple and consider a proper nontrivial subgroup $L$ of $M_2$. By the same argument as in case (a), $\Gamma_L$ is a cycle and the vertices $(-1,1)$ and $(1,1)$ must lie in the same $L$-orbit. Again,  since $M_2$ is semiregular on vertices, it follows that $\mu^2 \in L$. This is a contradiction, so $M_2 \cong C_{r/2}$ has prime order, meaning that  $r =2p$ where $p$ is a prime. Thus part (b) is proved.
	\end{proof}
	
	\begin{Lemma}\label{IndCyclicSubgroupsNu}
		Suppose that $(\Gamma, G)\in \OG(4)$ is as described in one of the rows of Table $\ref{IndTypesTable}$  with appropriate $r,s \geq 3$. Suppose further that $(\Gamma, G)$ is basic of cycle type. 
		Then the following hold:
		\begin{enumerate}[(a)]
			\item 	If $(\Gamma, G)$ is as in Rows $1$ or $5$ of Table $\ref{IndTypesTable}$ then $s \in \{4,q\}$ for some odd prime $q$.
			\item If $(\Gamma, G)$ is as in Rows $2, 3$, or $4$ of Table $\ref{IndTypesTable}$ then $s \in \{4,2q\}$ for some odd prime $q$.
		\end{enumerate}
	\end{Lemma}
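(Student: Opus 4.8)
The plan is to run, essentially verbatim, the argument used for Lemma~\ref{IndCyclicSubgroupsMu}, but now tracking the cyclic subgroup generated by $\nu$ (in Rows 1 and 5, where $\langle\nu\rangle\cong C_s$ is normal in $G$) or by $\nu^2$ (in Rows 2, 3 and 4, where $\langle\nu^2\rangle$ is normal in $G$); in either case this subgroup, and all of its subgroups, are normal in $G$ by the `Normal Subgroups' column of Table~\ref{IndTypesTable} together with Remark~\ref{rem1}. The one genuinely new feature is that $\nu$ (and $\nu^2$) fixes the \emph{first} coordinate of every vertex rather than the second, so the pair of neighbours of $(0,0)$ that is forced to collapse under a cyclic normal quotient is $\{(1,-1),(1,1)\}$ rather than $\{(-1,1),(1,1)\}$; this is exactly what makes the resulting bound fall on $s$ instead of $r$.

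For part (a), let $(\Gamma,G)$ be as in Row 1 or 5, so $N:=\langle\nu\rangle\cong C_s$ is normal in $G$, acting on $V\Gamma$ (namely on $X$ in Row 1 and on $X_2$ in Row 5) semiregularly and with at least $r\geq3$ orbits. If $s$ is a prime then $s\geq3$ makes it an odd prime, as required, so assume $s$ is composite. Then $N$ has a proper nontrivial subgroup $L$, and since $L\leq N$ and $N$ has at least three orbits, $\Gamma_L$ is a proper nontrivial $G$-normal quotient with more than two vertices; as $(\Gamma,G)$ is basic it must therefore be a cycle $\mathbf{C}_k$ with $k\geq3$. Just as in Lemma~\ref{IndCyclicSubgroupsMu}, the four neighbours $(\pm1,\pm1)$ of $(0,0)$ (or $(\pm1,\pm1)_1$ of $(0,0)_0$ in Row 5) then fall into exactly two $L$-orbits; since every element of $L$ fixes the first coordinate and $1\neq-1\pmod r$ (as $r\geq3$), no neighbour of first coordinate $1$ can share an $L$-orbit with a neighbour of first coordinate $-1$, so $(1,-1)$ and $(1,1)$ must lie in a common $L$-orbit. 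As $N$ is semiregular and $\nu^2$ is the unique element of $N$ sending $(1,-1)$ to $(1,1)$, this gives $\nu^2\in L$. Since $L\neq N$, this forces $s$ even (otherwise $\langle\nu^2\rangle=N$) and $L=\langle\nu^2\rangle$. In Row 5 this already contradicts $s$ being odd, so there $s$ must be the prime $q$. In Row 1 we repeat the argument with the order-$2$ subgroup $K=\langle\nu^{s/2}\rangle$: again $\Gamma_K$ is a cycle and the same reasoning forces $\nu^2\in K$, whence $\nu^2=\nu^{s/2}$ (note $\nu^2\neq1$), i.e.\ $s/2\equiv2\pmod s$; since $s/2\geq2$ this yields $s=4$. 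This establishes part (a).

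For part (b), let $(\Gamma,G)$ be as in Row 2, 3 or 4, so $s$ is even, $s\geq4$, and $N_2:=\langle\nu^2\rangle\cong C_{s/2}\neq1$ is normal in $G$, acting on $V\Gamma$ (namely on $X$ in Row 3 and on $X^{+}$ in Rows 2 and 4) semiregularly and with at least $r\geq3$ orbits. If $s/2$ is prime then $s=2q$ as required, so assume $s/2$ is composite, and let $L$ be a proper nontrivial subgroup of $N_2$; then $\Gamma_L$ is a cycle by the same reasoning as above. Again the two $L$-orbits among the neighbours of $(0,0)$ are separated by the first coordinate (using $1\neq-1\pmod r$), so $(1,-1)$ and $(1,1)$ lie in a common $L$-orbit, and semiregularity of $N_2$ (with $\nu^2$ the unique element sending $(1,-1)$ to $(1,1)$) gives $\nu^2\in L$, contradicting $L\subsetneq N_2$. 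Hence $s/2$ is prime and $s=2q$.

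The step needing the most care is the case-by-case verification that, in each of the five rows, the group being used ($\langle\nu\rangle$ or $\langle\nu^2\rangle$) really does act on the correct vertex set ($X$, $X_2$ or $X^{+}$), does so semiregularly, and has at least three orbits there. The delicate case is $\Gamma^{+}(r,s)$ in Rows 2 and 4: there $\nu$ itself does not preserve $X^{+}$, so one must work throughout with $\nu^2$ and check that $\langle\nu^2\rangle$ is semiregular of order $s/2$ on $X^{+}$. Apart from this bookkeeping, the proof is the exact mirror image of the proof of Lemma~\ref{IndCyclicSubgroupsMu} under interchanging the two coordinates (and the roles of $r$ and $s$).
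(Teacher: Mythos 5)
Your proposal is correct and follows the same route as the paper, which itself proves this lemma by observing that the arguments of Lemma \ref{IndCyclicSubgroupsMu} transfer verbatim upon swapping the roles of $\mu$ and $\nu$ (so that the collapsing pair of neighbours becomes $(1,-1),(1,1)$, as $\nu$ fixes the first coordinate). You have simply written out the details the paper leaves implicit, including the correct regrouping of Row 3 into the $\langle\nu^2\rangle$ case and the check that one works with $\nu^2$ on $X^{+}$.
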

	\begin{proof}
		Note that in all cases $\langle \nu \rangle$ and $\langle \nu^2 \rangle$ are semiregular on vertices. The proofs of (a) and (b) here can thus be  handled using virtually identical arguments to the proofs of (a) and (b) of Lemma \ref{IndCyclicSubgroupsMu} by swapping the roles of $\mu$ and $\nu$, and noting that $\nu$ only affects the second coordinate of a vertex (instead of the first).
	\end{proof}
	
	Before giving our main result for this section we need one more result for the special case where $G$ has a normal subgroup of order $2$.
	
	\begin{Lemma} \label{Indlength2orbit}
		Suppose that $(\Gamma, G)\in \OG(4)$ is as described in one of Rows $1-4$ of Table $\ref{IndTypesTable}$ with appropriate $r,s \geq 3$. If $(\Gamma, G)$ is basic of cycle type with $L$ a normal subgroup of $G$ of order $2$, then one of $r, s$ is equal to $4$. 
		Moreover, in this case $L$ must swap $(0,0)$ with one of $(2,0), (2,2),$ or $(0,2)$.
	\end{Lemma}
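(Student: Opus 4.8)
Write $L=\langle t\rangle$, where $t$ is the involution generating $L$. The strategy is to identify $t$ as a coordinate translation of the vertex set and then to extract the constraint on $(r,s)$ from the fact that $t$ is an involution while $\Gamma_L$ is a cycle.

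First I would observe that, since $\Aut(C_2)$ is trivial, conjugation by $G$ acts trivially on $L$, so $t\in Z(G)$; and since $G$ is vertex-transitive, a central element that fixes one vertex fixes every vertex. Hence $t$ is fixed-point-free, $L$ is semiregular, and $|V\Gamma_L|=|V\Gamma|/2$. In all of Rows $1$--$4$ one has $|V\Gamma|\ge 8$ (it is $rs\ge 9$ for Rows $1,3$, and $rs/2\ge 8$ for Rows $2,4$ since $r,s$ are even and at least $3$), so $\Gamma_L$ is neither $K_1$ nor $K_2$; as $(\Gamma,G)$ is basic, $\Gamma_L\cong\mathbf{C}_m$ with $m=|V\Gamma|/2\ge 4$. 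Now the four neighbours $(\pm1,\pm1)$ of $\alpha:=(0,0)$ are distinct (as $r,s\ge 3$) and lie in the vertex set, and because $t\in L$ the neighbourhood of $\alpha^L$ in the $2$-valent graph $\Gamma_L$ is precisely $\{(1,1)^L,(1,-1)^L,(-1,1)^L,(-1,-1)^L\}$, which therefore has exactly two elements. As $L$ is semiregular, this forces $t$ to interchange $(1,1)$ with one of the other three neighbours, so $t(1,1)\in\{(1,-1),(-1,1),(-1,-1)\}$.

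Next I would show that $t$ acts on the vertex set as a translation $(i,j)\mapsto(i+a,j+b)$. Every element of each of the groups $G(r,s),G^{+}(r,s),H(r,s),H^{+}(r,s)$ has the affine form $(i,j)\mapsto(\varepsilon i+a,\varepsilon' j+b)$ with $\varepsilon,\varepsilon'\in\{\pm1\}$, so it suffices to rule out $\varepsilon=-1$ and $\varepsilon'=-1$. This is a short case check using that $t$ centralises each generator of $G$ listed in Table~\ref{IndDefTable} and Definition~\ref{IndDefDouble}: comparing $t$ with $\mu$ in Rows $1,3$ (and with $\mu^2$ together with $\mu\nu$, resp. $\sigma\mu\nu$, in Rows $2,4$) for the first coordinate, and with $\nu$ in Row $1$ (and with $\nu^2=(\sigma\nu)^2=(\mu\nu)^2\mu^{-2}=(\sigma\mu\nu)^2$ together with $\mu\nu$, $\sigma\mu\nu$ or $\tau$ as appropriate in the other rows) for the second, one finds in every case that $\varepsilon=\varepsilon'=1$, using $r,s\ge 3$ throughout. (For $H(r,s)$ one also obtains $a=0$, from commuting with $\sigma\nu$ and $r$ being odd.) So $t$ is the translation $(i,j)\mapsto(i+a,j+b)$ with $(a,b)\ne(0,0)$.

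Finally, since $t(1,1)=(1+a,1+b)$, the first step gives $(a,b)\in\{(0,-2),(-2,0),(-2,-2)\}$. Now $t^2=1$ amounts to $2a\equiv 0\pmod r$ and $2b\equiv 0\pmod s$, which in the three cases forces $s\mid 4$, $r\mid 4$, or both, hence (as $r,s\ge 3$) $s=4$, $r=4$, or $r=s=4$ respectively; and $t(\alpha)=(a,b)$ is then $(0,2)$, $(2,0)$, or $(2,2)$ accordingly. In every case one of $r,s$ equals $4$ and $L$ swaps $(0,0)$ with one of $(2,0),(2,2),(0,2)$, as claimed. The main obstacle is the case check in the third paragraph: one must check that each of the four explicit groups consists of affine maps and that a central involution in it cannot reverse a coordinate. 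For $G(r,s)$ this is immediate from commuting with $\mu$ and $\nu$, but for $G^{+},H,H^{+}$ a little care is needed --- these groups do contain coordinate-reversing elements, so the conclusion genuinely requires commutativity of $t$ with several of the listed generators rather than an abstract description of $Z(G)$.
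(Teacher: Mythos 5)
Your argument is correct and reaches the stated conclusion, but by a genuinely different route from the paper's. The paper works purely combinatorially and uniformly across all four rows: since $\Gamma_L$ is a cycle, the two $L$-orbits adjacent to $\{(0,0),\alpha\}$ contain only four vertices between them, so all four neighbours $(\pm1,\pm1)$ of $(0,0)$ must also be neighbours of $\alpha$; it then just lists the neighbourhoods of $(\pm1,\pm1)$ and observes that they have a common neighbour other than $(0,0)$ if and only if $2\equiv-2\pmod r$ or $2\equiv-2\pmod s$. No group-theoretic information beyond semiregularity is used. You instead pin down the involution $t$ itself: centrality (from $|\Aut(C_2)|=1$) gives semiregularity and, after a case check on generators, the translation form $(i,j)\mapsto(i+a,j+b)$, whence $t^2=1$ yields $2a\equiv0\pmod r$ and $2b\equiv0\pmod s$. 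This buys an explicit description of $t$, at the cost of the group-by-group verification --- and that is exactly where your sketch is thinnest. For Row 4 with $r=4$, commuting with $\mu^2$ and $\sigma\mu\nu$ alone does \emph{not} force $\varepsilon=+1$ in the first coordinate: $\mu^2$ only gives $4\equiv0\pmod r$, and $\sigma\mu\nu$ itself reverses that coordinate, so commuting two reversing affine maps yields only $2(c-a)\equiv0\pmod r$. You must also invoke $\tau$ (for instance via $\tau\sigma\mu\nu\in H^{+}(r,s)$, which translates the first coordinate by $\pm1$); with that patch the case check closes. One further caveat, shared with the paper's own proof: both arguments implicitly assume $t$ does not send $(0,0)$ to one of its neighbours (i.e.\ that $(0,0)^L$ is not adjacent to itself); in your setup this is excluded a posteriori by the translation form, since $t(0,0)=(\pm1,\pm1)$ would force $2\equiv0$ modulo both $r$ and $s$, so the omission is harmless.
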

	\begin{proof}
		Suppose that $(\Gamma, G)$ is basic of cycle type. Then since $|\Gamma| >4$ and $|L|=2$, the quotient $\Gamma_L$ has at least three vertices, and hence $\Gamma_L$ must be a cycle.  Consider the $L$-orbit containing the vertex $(0,0)$. This $L$-orbit contains exactly one other vertex, say $\alpha$. The neighbours of $(0,0)$ in $\Gamma$ are precisely $(-1,-1), (1,-1), (-1,1) $ and $(1,1)$, and since $\Gamma_L$ is a cycle it follows that these four vertices must all be adjacent to $\alpha$. 
		
		By Definition \ref{IndDef} we know that the vertex set of $\Gamma$ is either $X$ or $X^+$, and we may check from Definition~\ref{IndDef} that in either case, the neighbourhoods  of $(-1,-1), (1,-1), (-1,1) $ and $(1,1)$ are as follows: 
		\begin{align*}
			\Gamma((-1,-1)) &= \{(0,0), (-2,0), (-2,-2), (0,-2)\}, \\
			\Gamma((1,-1))  &= \{(0,0), (2,0), (2,-2), (0,-2)\}, \\
			\Gamma((-1,1))  &= \{(0,0), (-2,0), (-2,2), (0,2)\}, \\
			\Gamma((1,1))   &= \{(0,0), (2,0), (2,2), (0,2)\}. 
		\end{align*}
		The four vertices $(-1,-1), (1,-1), (-1,1), (1,1)$ therefore have a common neighbour other than $(0,0)$ if and only if at least one of $2 \equiv -2 $ (mod $r$), or $2 \equiv -2 $ (mod $s$). Hence either $r$ or $s$ divides 4, and so (since $r, s\geq 3$) one of $r,s$ must be equal to 4. Thus $\alpha$ must be one of $(2,0), (2,2),$ or $(0,2)$, and as $L$ swaps $(0,0)$ and $\alpha$, the second part follows. 
	\end{proof}

	Finally, we put together these results to prove Theorem \ref{1to5Necc} which gives the possible values of $(r,s)$ for Theorem \ref{IndMainTheorem}.
	
	\begin{Theorem} \label{1to5Necc}
		Suppose that $(\Gamma, G)\in \OG(4)$ is as described in one of the rows of Table $\ref{IndTypesTable}$  with appropriate $r,s \geq 3$. Suppose that $(\Gamma, G)$ is basic of cycle type. Then $\Gamma, G, (r,s)$ are as in the appropriate Row of Table~$\ref{1to5NeccTable}$.
	\end{Theorem}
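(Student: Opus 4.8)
The plan is to prove Theorem \ref{1to5Necc} by a short case analysis over the five rows of Table \ref{IndTypesTable}: in each row I intersect the constraints supplied by Lemmas \ref{IndCyclicSubgroupsMu}, \ref{IndCyclicSubgroupsNu} and \ref{Indlength2orbit} with the parity condition recorded in the `Conditions on $r,s$' column of Table \ref{IndTypesTable}. Throughout, $(\Gamma,G)$ is assumed to lie in one of the rows of Table \ref{IndTypesTable} and to be basic of cycle type, and $p,q$ denote odd primes.

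First I would record the preliminary restrictions on $r$ and $s$. By Lemma \ref{IndCyclicSubgroupsMu}, $r\in\{4,p\}$ in Rows 1, 3, 5 and $r\in\{4,2p\}$ in Rows 2, 4; symmetrically, by Lemma \ref{IndCyclicSubgroupsNu}, $s\in\{4,q\}$ in Rows 1, 5 and $s\in\{4,2q\}$ in Rows 2, 3, 4. I then form the set of pairs $(r,s)$ permitted by these facts and delete those violating the parity condition of Table \ref{IndTypesTable} (at least one of $r,s$ odd in Row 1; both even in Rows 2 and 4; $r$ odd and $s$ even in Row 3; both odd in Row 5). For Rows 1, 3, 4, 5 this already yields exactly the list in the corresponding row of Table \ref{1to5NeccTable}: for Row 1 one gets $\{(4,q),(p,4),(p,q)\}$, for Row 3 one gets $\{(p,4),(p,2q)\}$, for Row 4 one gets $\{(4,4),(4,2q),(2p,4),(2p,2q)\}$, and for Row 5 one gets $\{(p,q)\}$, which up to renaming the primes are precisely the table entries.

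It remains to deal with Row 2, where $\Gamma=\Gamma^{+}(r,s)$, $G=G^{+}(r,s)=\langle\mu^{2},\mu\nu,\sigma\rangle$, and the preliminary restrictions leave $(r,s)\in\{(4,4),(4,2q),(2p,4),(2p,2q)\}$; here I must rule out $(r,s)=(2p,2q)$. I would do this by exhibiting a normal subgroup of $G^{+}(2p,2q)$ of order $2$ and invoking Lemma \ref{Indlength2orbit}. Let $x:=\mu^{p}\nu^{q}$. Since $p-q$ is even, the congruence $q+2m\equiv p\pmod{2p}$ has a solution $m$, and then $x=(\mu\nu)^{q}(\mu^{2})^{m}$, so $x\in G^{+}(2p,2q)$; also $x$ preserves $X^{+}$ because $p\equiv q\pmod 2$. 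From $\mu^{2p}=\nu^{2q}=1$ and $\langle\mu\rangle\cap\langle\nu\rangle=1$ it follows that $x$ has order exactly $2$. Moreover $x$ commutes with $\mu^{2}$ and $\mu\nu$, and $x^{\sigma}=\mu^{-p}\nu^{q}=\mu^{p}\nu^{q}=x$ (using $\mu^{2p}=1$ and $\nu^{\sigma}=\nu$ for $G^{+}$), so $L:=\langle x\rangle$ is a central, hence normal, subgroup of $G^{+}(2p,2q)$ of order $2$. By Lemma \ref{Indlength2orbit} one of $r=2p$, $s=2q$ must equal $4$, impossible for odd primes $p,q\geq 3$; hence $(2p,2q)$ cannot occur in Row 2, and the list for Row 2 is $\{(4,4),(4,2q),(2p,4)\}$, as in Table \ref{1to5NeccTable}. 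Collecting the five cases completes the proof.

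The main point to get right is the Row 2 exclusion: one must check carefully that the order-$2$ element $\mu^{p}\nu^{q}$ genuinely lies in $G^{+}(2p,2q)$ and is normalised by its generators. It is worth noting that the analogous pure translation need not belong to $H^{+}(2p,2q)=\langle\mu^{2},\sigma\mu\nu,\tau\rangle$ — whose translation subgroup has odd order $pq$ — which is exactly why $(2p,2q)$ survives in Row 4 but not in Row 2. Everything else in the argument is routine bookkeeping over finitely many cases.
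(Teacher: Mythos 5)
Your proposal is correct and follows essentially the same route as the paper: intersect the outputs of Lemmas \ref{IndCyclicSubgroupsMu} and \ref{IndCyclicSubgroupsNu} with the parity constraints of Table \ref{IndTypesTable}, and eliminate $(r,s)=(2p,2q)$ in Row 2 by exhibiting the central involution $\mu^p\nu^q=(\mu\nu)^q(\mu^2)^{(p-q)/2}\in G^+(2p,2q)$ and applying Lemma \ref{Indlength2orbit}. Your explicit verification that $\langle\mu^p\nu^q\rangle$ is central (hence normal) and your closing remark contrasting Rows 2 and 4 are slightly more detailed than the paper's treatment but do not change the argument.
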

	
	\begin{table}[H] 
		\centering
		\begin{tabular} {cllll} 
			\hline 
			Row of Table~\ref{IndTypesTable} & ${\Gamma}$ & ${G} $& Possibilities for $(r, s)$  &Conditions \\
			\hline $1$ & $\Gamma(r, s) $& $G(r, s)$  & $(4,p), (p,4), $ or $(p,q)$ &$p$ and $q$ are odd primes\\
			$2$& $\Gamma^{+}(r, s)$ & $G^{+}(r, s)$ & $(4,4), (4,2p)$ or $ (2p,4)$ &$p$ is an odd prime\\
			$3$& $\Gamma(r, s) $&$ H(r, s)$ &  $(p,4)$ or $(p,2q)$  &$p$ and $q$ are odd primes\\
			$4$&$\Gamma^{+}(r, s) $ & $H^{+}(r, s)$  & $(4,4), (4,2p)$, $ (2p,4)$ or $(2p,2q)$ &$p$ and $q$ are odd primes\\
			$5$& $\Gamma_{2}(r, s) $ & $G_{2}(r, s) $ &  $(p,q)$ &$p$ and $q$ are odd primes\\
			\hline
		\end{tabular}\caption{Possibilities for $(r,s)$ for basic pairs of independent-cycle type.} \label{1to5NeccTable}
	\end{table}

	\begin{proof}
		Combining Lemmas \ref{IndCyclicSubgroupsMu} and \ref{IndCyclicSubgroupsNu} we obtain the following possibilities for $r$ and $s$ in each of the Rows of Table~\ref{IndTypesTable}.
		\begin{align*}
			&\hbox{In Row 1, } &r \in \{4,p\} &\hbox{ and } s \in \{4,q\},\\
			&\hbox{in Row 2, } &r \in \{4,2p\}  &\hbox{ and } s \in \{4,2q\},\\
			&\hbox{in Row 3, } &r \in \{4,p\}   &\hbox{ and } s \in \{4,2q\},\\
			&\hbox{in Row 4, } &r \in \{4,2p\}&\hbox{ and } s \in \{4,2q\},\\
			&\hbox{in Row 5, } &r \in \{4,p\}   &\hbox{ and } s \in \{4,q\},
		\end{align*}
		where $p$ and $q$ are odd primes (possibly equal). Hence there are at most four possibilities in each case. We will now show that some of these possibilities cannot occur. For most arguments, we will refer to the conditions on $r,s$ in Table \ref{IndTypesTable}.
		
		In Row 1, $(r,s) \neq (4,4)$, since one of $r,s$ must be odd. Similarly, in Row 3, $(r,s) \neq (4,4), (4,2q)$ since $r$ is odd, and in Row 5 only  $(r,s) = (p,q)$ is possible since $r$ and $s$ are both odd. In Row 4, we do not exclude any possibility.
		Finally, consider Row 2 with $(r,s) = (2p,2q)$ where $p$ and $q$ are odd primes. Here we have  $p = q + 2\ell$ for some $\ell$. Then since $G^+(r,s) = \langle \mu^{2}, \mu \nu, \sigma\rangle$ and $\mu$ and $\nu$ commute, we get
		$(\mu\nu)^q\cdot (\mu^2)^\ell= \mu^p\nu^q \in G^+(r,s)$, and $\langle \mu^p\nu^q\rangle$ is a normal subgroup of $G^+(r,s)$ of order 2 (using $\mu^{2p}=\nu^{2q}=1$ and the definition of $\sigma$ from Definition~\ref{IndDef}). This however contradicts  Lemma \ref{Indlength2orbit} since neither of $r,s$ is equal to 4. 
		This completes the proof.
	\end{proof}
	
	\section{Proof of Theorem \ref{IndMainTheorem}: Sufficient Condition}\label{secIndSuf}
	In this section we complete the proof of Theorem \ref{IndMainTheorem} by showing that for each of the values of $(r,s)$ given in Table \ref{IndMainTable}, the appropriate pair $(\Gamma, G)$ from the same table will be basic of cycle type. 
	We do this by showing that, for all such $(\Gamma, G)$ and $(r,s)$, all of the normal quotients of the graph $\Gamma$ with respect to minimal normal subgroups of the group $G$ are cycles. 
	
	Since all groups $G$ appearing in Table \ref{IndMainTable} are isomorphic to direct products of cyclic or dihedral groups (or quotients of such groups), we begin with the following lemma concerning minimal normal subgroups of such groups. 
	\begin{Lemma} \label{IndMinDrCr}
		Suppose $G= H \times K$  where $H \cong D_r$, and $K \cong C_s$ or $K \cong D_s$ for some $r,s \geq 3$.  \\
		If $M$ is a minimal normal subgroup of $G$ then either
		\begin{enumerate}[(a)]
			\item $M$ is a minimal normal subgroup of $H$, so $M \cong C_p$ where $p$ is a prime divisor of $r$; or
			\item $M$ is a minimal normal subgroup of $K$, so $M \cong C_p$ where $p$ is a prime divisor of $s$; or
			\item both $r$ and $s$ are even, $M \leq Z(G)$, and $M \cong C_2$.
		\end{enumerate}  
	\end{Lemma}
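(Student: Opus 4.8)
The plan is to use the standard fact that a minimal normal subgroup $M$ of a finite group $G$ is a product of isomorphic simple groups, and that since $G = H \times K$ is solvable (being a product of a dihedral and a cyclic or dihedral group), $M$ must be an elementary abelian $p$-group for some prime $p$. First I would project $M$ onto the two direct factors via the natural projections $\pi_H \colon G \to H$ and $\pi_K \colon G \to K$. Since $M$ is normal in $G$, each image $\pi_H(M)$ is normal in $H$ and $\pi_K(M)$ is normal in $K$, and $M$ embeds into $\pi_H(M) \times \pi_K(M)$. If $\pi_K(M) = 1$ then $M \leq H \times 1 \cong H$, so $M$ is a nontrivial normal subgroup of $H$; by minimality in $G$ it is minimal in $H$, and a minimal normal subgroup of a dihedral group $D_r$ is cyclic of prime order $p$ dividing $r$ (it must lie in the cyclic rotation subgroup $C_r$ unless $r$ is a power of $2$, in which case one should also check the reflections generate nonabelian or larger subgroups — but a minimal normal subgroup is abelian, and the only normal subgroups of $D_r$ are subgroups of $C_r$ together with, when $4\mid 2r$ appropriately, two index-two dihedral subgroups, none of which except subgroups of $C_r$ can be of prime order). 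This gives case (a); symmetrically $\pi_H(M) = 1$ gives case (b).

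The remaining case is when both projections $\pi_H(M)$ and $\pi_K(M)$ are nontrivial. Here I would argue as follows. Since $M$ is elementary abelian of prime-power order $p^k$, both $\pi_H(M)$ and $\pi_K(M)$ are nontrivial elementary abelian $p$-groups that are normal in $H$ and $K$ respectively. The key observation is that $M \cap (H \times 1)$ and $M \cap (1 \times K)$ are both normal in $G$ and contained in $M$; by minimality of $M$, each is either trivial or all of $M$. If $M \cap (H \times 1) = M$ we are back in case (a), and similarly for $K$; so in the genuinely new situation both intersections are trivial, which forces $M$ to be a "diagonal" subgroup, i.e. the projections $\pi_H|_M$ and $\pi_K|_M$ are both injective, so $\pi_H(M) \cong M \cong \pi_K(M)$. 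Now take any $0 \neq m \in M$ and consider conjugation by an element $h \in H$ with $\pi_H(m)^h \neq \pi_H(m)$ — such an $h$ exists whenever $\pi_H(M)$ is not central in $H$. Then $m^h m^{-1}$ is a nontrivial element of $M \cap (H \times 1)$, contradicting triviality of that intersection. Hence $\pi_H(M) \leq Z(H)$, and likewise $\pi_K(M) \leq Z(K)$. Since $Z(D_r)$ has order $2$ if $r$ is even and is trivial if $r$ is odd (and $Z(C_s) = C_s$, $Z(D_s)$ has order $2$ if $s$ even, trivial if $s$ odd), the hypothesis that both projections are nontrivial forces $r$ even, and forces $s$ even in the $D_s$ case; in the $C_s$ case $\pi_K(M) \leq C_s$ is automatic but combined with $|\pi_K(M)| = |M| = |\pi_H(M)| \leq |Z(D_r)| = 2$ we still get $M \cong C_2$, and $s$ must then be even for $C_s$ to contain an element of order $2$. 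In all sub-cases $|M| = 2$, so $M \cong C_2$, and $M \leq Z(H) \times Z(K) = Z(G)$, which is case (c).

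Finally I would tidy up: in case (c), note $M \leq Z(G)$ and $|M| = 2$ gives $M \cong C_2$, and the existence of an order-$2$ central element in each of $H \cong D_r$ and $K$ (which receives a nontrivial projection) forces both $r$ and $s$ even. The main obstacle I anticipate is the bookkeeping in the "diagonal" case — specifically making the argument uniform across $K \cong C_s$ and $K \cong D_s$, and being careful that "minimal normal subgroup of $H$" genuinely yields $C_p$ with $p \mid r$ rather than some reflection subgroup; this rests on the elementary classification of normal subgroups of dihedral groups, which I would invoke directly. No heavy machinery is needed beyond that classification and the structure of minimal normal subgroups of solvable groups.
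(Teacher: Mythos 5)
Your proposal is correct and follows essentially the same route as the paper: split on whether $M$ meets (equivalently, is contained in) a direct factor, and in the remaining ``diagonal'' case use the commutator/conjugation argument to force $M\leq Z(H)\times Z(K)=Z(G)$, then read off $M\cong C_2$ and the parity of $r,s$ from the centres of the dihedral and cyclic factors. The only cosmetic difference is your invocation of solvability to get that $M$ is elementary abelian, which neither argument actually needs.
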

	\begin{proof}
		Let $M$ be a minimal normal subgroup of $G$. If $M \cap H \neq 1$ then $M \cap H$ is a normal subgroup of $G$ and hence must be equal to $M$ by assumption. In particular, $M$ is a minimal normal subgroup of $G$ contained in $H$.  If $R\neq 1$ is a normal subgroup of $H$ contained in $M$, then it is also a normal subgroup of $G$, so $R= M$. Hence $M$ must be a minimal normal subgroup of $H \cong D_r$ and so $M \cong C_p$ where $p$ is a prime divisor of $r$.  If $M \cap K \neq 1$ then the same argument shows that $M$ is a minimal normal subgroup of $K$, so $M \cong C_p$ where $p$ is a prime divisor of $s$.
		
		Suppose on the other hand that $1 = M \cap H = M\cap K$. Then $M$ projects nontrivially onto each of $H$ and $K$ and we claim that $M \leq Z(G)$. To see this, take $m \in M$ where $m= h\cdot k$ where $h \in H$ and $k \in K$. Now take an arbitrary $g \in H$ and note that since $M$ is normal in $G$, it follows that $m^{-1}\cdot m^g \in M$. Now since $k$ commutes with all elements in $H$, we get $m^{-1}\cdot m^g = k^{-1}h^{-1}g^{-1}hkg = h^{-1}g^{-1}hg = [h,g] \in M$. Thus $[h,g] \in M \cap H = 1$, and since $g$ was arbitrary it follows that $h$ commutes with every element of $H$. An analogous argument shows that $k$ commutes with each element of $K$. Hence $h \in Z(H)$ and $k \in Z(K)$ and so $m  = hk \in Z(H)\times Z(K) = Z(G)$. Therefore $M \leq Z(G)$.  
		
		The conditions $M\cap H=M\cap K=1$ imply that the projection maps from $H\times K$ to $H$, and from $H\times K$ to $K$, restrict to isomorphisms from $M$ to a subgroup of $Z(H)$, and of $Z(K)$, respectively. In particular both $Z(H)$ and $Z(K)$ are nontrivial. 	
		Now $Z(D_r) = 1$ if $r$ is odd, and $Z(D_r) \cong C_2$ if $r$ is even. Thus $r$ is even, and $M\cong Z(H)\cong C_2$. Similarly, if $K=D_{s}$ then $s$ is even. On the other hand, if $K \cong C_s$ then, since $Z(K)$ contains a subgroup isomorphic to $M\cong  C_2$, $s$ is even in this case also.
	\end{proof}

We now go through each of the rows of Table \ref{IndTypesTable} and show that, for each of the values of $(r,s)$  in Table~\ref{1to5NeccTable} (which were deduced in Theorem \ref{1to5Necc}), the corresponding pair $(\Gamma, G)$ appearing in Table \ref{IndTypesTable} is basic of cycle type. The next result deals with the first row of Table \ref{IndTypesTable}. 

\begin{Proposition}
	Suppose that $\Gamma:= \Gamma(r,s)$ and $G:= G(r,s)$ are as described in Row 1 of  Table \ref{IndTypesTable}.  If $(r,s)$ is of the form $(4,p), (p,4), $ or $(p,q)$  where $p$ and $q$ are odd primes, then $(\Gamma, G)$ is basic of independent-cycle type.
\end{Proposition}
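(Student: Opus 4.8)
The plan is to verify that the pair $(\Gamma,G)$ in question is basic of cycle type by checking that \emph{every} $G$-normal quotient relative to a nontrivial normal subgroup of $G$ is degenerate (a cycle), and since it already has independent cyclic normal quotients by Theorem~\ref{IndTypesThm}, this will place it in the independent-cycle type. Because $G = G(r,s) = \widetilde M \times N$ with $\widetilde M = \langle\mu,\sigma\rangle\cong D_r$ and $N = \langle\nu\rangle\cong C_s$, by Lemma~\ref{IndMinDrCr} every minimal normal subgroup $L$ of $G$ is either (i) $\cong C_p$ inside $\widetilde M$ (so $L\le\langle\mu\rangle$ since $p\mid r$ is odd—note $r\in\{4,p\}$, and the unique subgroup of order $p$ of $D_r$ for odd $p$ lies in the cyclic part), (ii) $\cong C_q$ inside $N$, namely $L = \langle\nu^{s/q}\rangle$, or (iii) $\cong C_2$ central, which can only happen when both $r,s$ are even — impossible here since at least one of $r,s$ is an odd prime and the other is $4$ or an odd prime, so at most one is even. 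It suffices to show $\Gamma_L$ is a cycle for each minimal normal $L$, because any nontrivial normal subgroup contains a minimal one $L$, and if $\Gamma_L$ is a cycle then by the structure theorem \cite[Theorem 1.1]{al2015finite} the larger quotient is also a cycle (quotients of cycles among the degenerate possibilities are cycles or $K_1,K_2$).

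So the core computation is: for $L = \langle\mu^{r/p}\rangle$ (when this makes sense, i.e. $p\mid r$) show $\Gamma_L\cong\mathbf{C}_{s}$ or a cycle, and for $L=\langle\nu^{s/q}\rangle$ show $\Gamma_L$ is a cycle. First I would handle $L=\langle\mu^a\rangle$ where $a = r/p$: its orbits on $X = \Z_r\times\Z_s$ are the sets $\{(i+ka,j): k\}$, i.e. cosets of $\langle a\rangle\times\{0\}$ in the first coordinate. Two such orbits, indexed by $(\bar i,j)$ and $(\bar i',j')$, are adjacent in $\Gamma_L$ iff some representatives differ by $(\pm1,\pm1)$, which forces $j' = j\pm1$ and imposes a condition on $\bar i' - \bar i$ modulo $\gcd(p,\cdot)$; when $p$ is odd (or $r=4$, $p=2$) one checks the quotient graph is connected of valency $2$, hence a cycle, and I would compute its length explicitly (it will be $\mathrm{lcm}$-type in $s$ and a small factor). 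The argument for $L = \langle\nu^{s/q}\rangle$ is symmetric, collapsing the second coordinate instead. The key point making valency drop to $2$ is exactly the observation already used in Lemma~\ref{IndCyclicSubgroupsMu}: the two out-neighbours $(1,1),(−1,1)$ of $(0,0)$ (resp.\ $(1,1),(1,-1)$) get identified in the quotient precisely when $2\equiv 0$ modulo the relevant parameter, and otherwise the four neighbours collapse to two distinct orbit-classes; I would make this precise for each $(r,s)$ case.

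The case analysis splits by whether $r$ or $s$ equals $4$. When $(r,s)=(p,q)$ with both odd primes, the only minimal normal subgroups are $\langle\mu\rangle\cong C_p$ itself and $\langle\nu\rangle\cong C_q$ itself (since $p,q$ prime), and $\Gamma_{\langle\mu\rangle}$ collapses all of the first coordinate, giving a graph on $\Z_s$ which one checks is $\mathbf{C}_s$ (because $p$ odd means $(1,1)$ and $(-1,1)$ are in the same $\langle\mu\rangle$-orbit), and symmetrically $\Gamma_{\langle\nu\rangle}\cong\mathbf{C}_r$. When $r=4$ (so $(r,s)=(4,p)$), the nontrivial normal subgroups inside $\langle\mu\rangle\cong C_4$ are $\langle\mu^2\rangle$ (minimal, order $2$) and $\langle\mu\rangle$; I'd check $\Gamma_{\langle\mu^2\rangle}$ is a cycle — here the relevant identification is $(0,0)\sim(2,0)$, consistent with Lemma~\ref{Indlength2orbit} — and $\Gamma_{\langle\mu\rangle}$ is a cycle as above; plus $\langle\nu\rangle\cong C_p$ minimal giving $\Gamma_{\langle\nu\rangle}\cong\mathbf{C}_4$. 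The case $(p,4)$ is symmetric.

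I expect the main obstacle to be bookkeeping rather than conceptual: one must be careful that when $r=4$ the subgroup $\langle\mu^2\rangle$ of the \emph{dihedral} group $\widetilde M = D_4$ is genuinely normal in all of $G$ and is the unique minimal normal subgroup sitting in the cyclic part (there could a priori be other involutions, e.g.\ reflections, but those are not normal), and that the quotient $\Gamma_{\langle\mu^2\rangle}$ — which identifies $(i,j)$ with $(i+2,j)$ on $\Z_4\times\Z_p$ — really is a single cycle (of length $2p$, I'd guess) and not, say, a disjoint union or a graph of larger valency; this requires verifying connectivity of $\Gamma$ itself (which holds by construction) descends correctly and that no two of the collapsed neighbour-classes coincide. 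Establishing that the quotient has valency exactly $2$ in every subcase — equivalently, that exactly the "second-coordinate" pair of neighbours, and not more, gets identified — is the crux, and it follows from the semiregularity of $\langle\mu\rangle$ (resp.\ $\langle\nu\rangle$) on $V\Gamma$ together with the parity of $p,q$, exactly as exploited in Lemmas~\ref{IndCyclicSubgroupsMu} and~\ref{Indlength2orbit}. Once valency $2$ and connectivity are in hand, $\Gamma_L$ is forced to be a cycle, completing the verification.
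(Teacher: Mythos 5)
Your proposal is correct and follows essentially the same route as the paper: reduce to minimal normal subgroups via Lemma \ref{IndMinDrCr} (ruling out the central $C_2$ case since at most one of $r,s$ is even), note the independent cyclic quotients $\Gamma_{\langle\mu\rangle}$ and $\Gamma_{\langle\nu\rangle}$ come from the earlier literature, and then verify by direct orbit/adjacency computation that the quotients by $\langle\mu\rangle$, $\langle\nu\rangle$, and (in the $r=4$ or $s=4$ cases) $\langle\mu^2\rangle$ or $\langle\nu^2\rangle$ are cycles. The bookkeeping you flag as the remaining work is exactly what the paper carries out, and your identification of $\langle\mu^2\rangle$ as the unique minimal normal subgroup inside the $D_4$ factor is the right care point.
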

\begin{proof}
	By Definition~\ref{IndDef}(a), $G = \widetilde{M} \times N = \langle \mu, \sigma\rangle \times \langle \nu \rangle \cong  D_r \times C_s$, and  by \cite[Lemma 2.7]{al2017cycle}, $\Gamma_N$ is a $G$-unoriented cycle and  $\Gamma_M$ is a  $G$-oriented cycle, and these are independent cyclic normal quotients. Thus we need only check that $(\Gamma(r,s), G(r,s))$  is basic of cycle type. Since one of $r,s$ is odd,  Lemma \ref{IndMinDrCr} tells us that a minimal normal subgroup of $G$ is either minimal normal in $\widetilde{M}$ or in $N$. 
	The $N$-orbits on $V\Gamma = \mathbb{Z}_r \times \mathbb{Z}_s$ are of the form $ B_{i}=\left\{(i, j) \mid j \in \mathbb{Z}_{s}\right\}$, for $ i \in \mathbb{Z}_{r}$, while the $\widetilde{M}$-orbits (and $M$-orbits where $M = \langle \mu \rangle $) are of the form $E_{j}=\left\{(i, j) \mid i \in \mathbb{Z}_{r}\right\}$, for $ j \in \mathbb{Z}_{s}$. 
	
	It is easy to check that if $(r,s) = (p,q)$, where $p$ and $q$ are odd primes then $(\Gamma(p,q), G(p,q))$  is basic of cycle type: by Lemma \ref{IndMinDrCr}, $G$ has two minimal normal subgroups $M = \langle \mu \rangle$ and $N = \langle \nu \rangle$, and $\Gamma_M$ and $\Gamma_N$ are both cycles. 
	
	On the other hand, if $\Gamma = \Gamma(4,p)$, and $G = G(4,p)$ where $p$ is an odd prime, then again $G$ will have two minimal normal subgroups. These are $N \cong C_p$ and $L \leq M$ where 
	$L := \langle \mu^2\rangle \cong C_2$. We know that $\Gamma_N$ is a cycle so we only need to check that $\Gamma_L$ is a cycle. To see that it is, we use the fact that the $L$-orbits are contained in the $M$-orbits which are  $\{E_{j}:  j \in \mathbb{Z}_{p}\}$. One can check that the $L$-orbits are of the form $E_{0,j} = \{(0,j),(2,j)\}$ or $E_{1, j} = \{(1,j),(3,j)\}$ for $j \in \mathbb{Z}_p$. In particular, we may check that in $\Gamma_L$,  the $L$-orbit  $E_{0,0}$ has only one out-neighbour $E_{1,1}$ and one in-neighbour $E_{1,-1}$. Hence $\Gamma_L$ is a cycle and $(\Gamma, G)$ is basic of cycle type.
	
	
	If $\Gamma = \Gamma(p,4)$ then $G$ again has two minimal normal subgroups by Lemma \ref{IndMinDrCr}, namely $M= \langle \mu \rangle$ and $J = \langle \nu^2 \rangle$. Here $\Gamma_M$ is a cycle and one can check that $\Gamma_J$ is a cycle by noting that the $J$-orbits are contained in the $N$-orbits which are $\{B_i : i \in \mathbb{Z}_p\}$. The $J$-orbits are $B_{i,0} = \{(i,0),(i,2)\}$ and $B_{i,1} = \{(i,1),(i,3)\}$ for $i \in \mathbb{Z}_p$, and $B_{0,0}$ has just two neighbours in $\Gamma_J$, namely $B_{1,1}$ and $B_{-1,1}$. Therefore $\Gamma_J$ is a cycle and hence  $(\Gamma, G)$ is basic of cycle type.
\end{proof}

To deal with the second row of Table \ref{IndTypesTable} we need the following result about minimal normal subgroups.

\begin{Lemma} \label{Index2}
	Suppose that $H$ is an index $2$ subgroup of a group $G$. Let $M$ be a minimal normal subgroup of $H$. Then either 
	\begin{enumerate}[(i)]
		\item $M$ is a minimal normal subgroup of $G$ contained in $H$; or 
		\item $M\times M^g$ is a normal subgroup of $G$ contained in $H$, where $g\in G\backslash H$.
	\end{enumerate}
\end{Lemma}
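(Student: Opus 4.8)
The plan is to work with the normal closure $M^G$ of $M$ in $G$ and then split into two cases according to whether $M$ is invariant under conjugation by an element outside $H$. Since $H$ has index $2$ in $G$ it is normal in $G$, and fixing any $g\in G\setminus H$ we have $G=H\cup Hg$, with conjugation by $g$ an automorphism of $H$; in particular $M^g$ is again a minimal normal subgroup of $H$. Note also that $M^g$ does not depend on which element of the coset $Hg$ we pick: for $h\in H$ we have $M^{hg}=(M^h)^g=M^g$ since $M\trianglelefteq H$.

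First I would compute $M^G$. Because every element of $G$ lies in $H$ or in $Hg$, and $M^h=M$ for all $h\in H$, the conjugates of $M$ in $G$ are just $M$ and $M^g$, so $M^G=\langle M,M^g\rangle=MM^g$. This is a subgroup since $M$ and $M^g$ are both normal in $H$; it is contained in $H$; and it is normal in $G$ by construction. So in either case we will be producing a normal subgroup of $G$ contained in $H$, as required.

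Next I would case on whether $M^g=M$. If $M^g=M$, then $M$ is invariant under $H$ and under $g$, hence normal in $G$, and I would verify minimality directly: any nontrivial $R\trianglelefteq G$ with $R\le M$ is in particular normal in $H$ and contained in $M$, so $R=M$ by minimality of $M$ in $H$; this gives alternative (i). If $M^g\ne M$, then $M\cap M^g$ is normal in $H$ and contained in $M$, so by minimality it equals $1$ or $M$; it cannot equal $M$, for otherwise $M\le M^g$, and conjugating by $g$ (using $g^2\in H$, so $M^{g^2}=M$ as $M\trianglelefteq H$) would give $M^g\le M^{g^2}=M$, forcing $M=M^g$, a contradiction. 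Hence $M\cap M^g=1$. Since normal subgroups of $H$ with trivial intersection centralise each other, $MM^g=M\times M^g$, and by the previous paragraph this is normal in $G$ and contained in $H$, giving alternative (ii).

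The only real subtlety is ruling out $M\cap M^g=M$ in the second case, i.e.\ showing that $M\le M^g$ is impossible when $M\ne M^g$; the point is that $g^2\in H$, so $M$ is $g^2$-invariant, which collapses the chain $M\le M^g\le M^{g^2}=M$. Everything else is routine: the standard fact that normal subgroups of $H$ meeting trivially commute elementwise (so their product is an internal direct product), and the short minimality check in case (i).
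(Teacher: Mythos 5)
Your proof is correct and follows essentially the same route as the paper: split on whether $M$ is normal in $G$, and in the second case form $M^* = \langle M, M^g\rangle = M\times M^g$. You additionally justify the step $M\cap M^g = 1$ (via minimality and $M^{g^2}=M$), which the paper's proof asserts without comment, so your write-up is, if anything, more complete.
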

\begin{proof}
	If $M$ is normal in $G$ then $M$ must be minimal normal in $G$ and (i) holds. Otherwise, if $M$ is not normal in $G$, then there is an element $g \in G \backslash H$ such that $M^g \neq M$. Now $G = \langle H, g \rangle$, and it follows that $M^g$ is a minimal normal subgroup of $H^g = H$ and so $M^* := \langle M, M^g\rangle = M \times M^g$ is a normal subgroup of $G$ contained in $H$. 
\end{proof}

\begin{Proposition}
	Suppose that $\Gamma^+:= \Gamma^+(r,s)$ and $G^+:= G^+(r,s)$ are as described in Row 2 of Table \ref{IndTypesTable}.   If $(r,s) $ is of the form $(4,4), (4,2p)$ or $ (2p,4)$ where $p$ is an odd prime then $(\Gamma^+, G^+)$ is basic of independent-cycle type.
\end{Proposition}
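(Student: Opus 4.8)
The plan is to mirror the proof of the previous Proposition. First, by Theorem~\ref{IndTypesThm} (Row~2 of Table~\ref{IndTypesThmTable}) together with the results of \cite{al2017cycle} (compare the previous Proposition), the pair $(\Gamma^+,G^+)$ already has independent cyclic normal quotients: the subgroups $M^+=\langle\mu^2\rangle$ and $N^+=\langle\nu^2\rangle$ of Definition~\ref{IndDef} are normal in $G^+$, and the quotients $\Gamma^+_{N^+}\cong\mathbf{C}_r$ (which is $G$-unoriented; the $N^+$-orbits on $X^+$ are the $r$ sets obtained by fixing the first coordinate) and $\Gamma^+_{M^+}\cong\mathbf{C}_s$ (which is $G$-oriented; its vertices are indexed by the $s$ values of the second coordinate) are independent cyclic normal quotients. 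So it remains only to show that $(\Gamma^+,G^+)$ is basic, i.e.\ that $\Gamma^+_L$ is degenerate for every nontrivial normal subgroup $L$ of $G^+$. Since every nontrivial normal subgroup contains a minimal one, and since every normal quotient of a degenerate graph is degenerate (a $G$-invariant partition of $K_1$, $K_2$ or $\mathbf{C}_m$ yields again one of these), it suffices to verify this for the minimal normal subgroups of $G^+$.

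The key step is therefore to list the minimal normal subgroups of $G^+=G^+(r,s)$ for $(r,s)\in\{(4,4),(4,2p),(2p,4)\}$. I would do this through the structure of $G^+$, obtained by a short computation with the generators $\mu^2,\mu\nu,\sigma$: one checks that $G^+(4,2p)\cong D_4\times C_p$, with $C_p=\langle\nu^2\rangle$ central and $D_4=\langle(\mu\nu)^p,\sigma\rangle$ (whose centre is $\langle\mu^2\rangle$); that $G^+(2p,4)\cong D_p\times C_4$, with $D_p=\langle\mu^2,\sigma\rangle$ and $C_4=\langle(\mu\nu)^p\rangle$ central (with involution $\nu^2$)---note that this case is genuinely different from the previous one, since $\sigma$ inverts $\mu$ but centralises $\nu$; and that $G^+(4,4)$ is a $2$-group of order $16$ whose centre is $\langle\mu^2,\nu^2\rangle\cong C_2\times C_2$. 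Applying Lemma~\ref{IndMinDrCr} in the first two cases, and using in the third case that a nontrivial normal subgroup of a nilpotent group always meets the centre, it follows that the minimal normal subgroups of $G^+$ are exactly $\langle\mu^2\rangle$ and $\langle\nu^2\rangle$, together with $\langle\mu^2\nu^2\rangle$ when $(r,s)=(4,4)$. (Alternatively, one can avoid identifying the isomorphism type by applying Lemma~\ref{Index2}, with $G^+$ viewed as the index-$2$ subgroup of $G(r,s)\cong D_r\times C_s$ killed by the homomorphism sending $\mu,\nu\mapsto1$ and $\sigma\mapsto0$, combined with Lemma~\ref{IndMinDrCr} applied to $D_r\times C_s$; the surviving `diagonal' candidates are then excluded using the parities of $r$ and $s$.)

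It then remains to check that each of these minimal normal subgroups $L$ gives a degenerate quotient. For $L=\langle\mu^2\rangle$ and $L=\langle\nu^2\rangle$ this is exactly the content of the first paragraph, since $\Gamma^+_L$ is one of the cycles $\mathbf{C}_s$ or $\mathbf{C}_r$. When $(r,s)=(4,4)$ there is one further subgroup, $L=\langle\mu^2\nu^2\rangle$: the element $\mu^2\nu^2\colon(i,j)\mapsto(i+2,j+2)$ acts on $X^+$ without fixed points, so its orbits have size $2$ and $\Gamma^+_L$ has $|X^+|/2=4$ vertices. A $4$-valent graph has at least five vertices, so by \cite[Theorem~1.1]{al2015finite} the graph $\Gamma^+_L$ is isomorphic to $K_1$, $K_2$ or a cycle, hence is degenerate. (One could equally observe directly that the four neighbours $(\pm1,\pm1)$ of $(0,0)$ lie in only the two $L$-orbits $\{(1,1),(-1,-1)\}$ and $\{(1,-1),(-1,1)\}$, so $\Gamma^+_L$ is connected, vertex-transitive and of valency at most $2$, hence a cycle.) Thus every minimal normal subgroup of $G^+$ has a degenerate quotient, so $(\Gamma^+,G^+)$ is basic, and by the first paragraph it is basic of independent-cycle type.

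The main obstacle is the middle step: correctly determining all the minimal normal subgroups of $G^+(r,s)$, and in particular making sure that no extra minimal normal subgroup has been overlooked. The computations are routine, but the three values of $(r,s)$ behave somewhat differently---and $(4,2p)$ and $(2p,4)$ are not interchangeable---so the bookkeeping must be carried out with care.
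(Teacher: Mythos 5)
Your proof is correct, and the overall skeleton (establish the two independent cyclic normal quotients via $M^+=\langle\mu^2\rangle$ and $N^+=\langle\nu^2\rangle$, reduce basicness to minimal normal subgroups, enumerate them, check each quotient is degenerate) coincides with the paper's. Where you genuinely diverge is in the enumeration step. The paper treats $G^+$ as an index-$2$ subgroup of $G(r,s)\cong D_r\times C_s$ and applies Lemma~\ref{Index2}: it first identifies which minimal normal subgroups of $G$ land inside $G^+$ (ruling out the central one $\langle\mu^{r/2}\nu^{s/2}\rangle$ by the parity criterion for membership in $G^+$), and then excludes ``type (ii)'' subgroups $L$ with $L\times L^g\trianglelefteq G$ by an explicit computation with involutions; for $(4,4)$ it simply quotes a \textsc{Magma} computation and the identification $\Gamma^+(4,4)\cong K_{4,4}$. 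You instead identify the isomorphism type of $G^+$ directly --- $G^+(4,2p)\cong D_4\times C_p$, $G^+(2p,4)\cong D_p\times C_4$ (both identifications check out, including the centrality of $(\mu\nu)^p$ in the second case, which uses $\nu^\sigma=\nu$ and $\mu^{2p}=1$) --- and apply Lemma~\ref{IndMinDrCr} to $G^+$ itself, while for $(4,4)$ you compute $Z(G^+)=\langle\mu^2,\nu^2\rangle$ and use the fact that minimal normal subgroups of a $2$-group are central of order $2$. This buys a computer-free, more uniform argument and avoids the slightly delicate involution bookkeeping of the paper's type-(ii) exclusion, at the cost of having to verify three structural isomorphisms by hand. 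Your handling of the extra subgroup $\langle\mu^2\nu^2\rangle$ in the $(4,4)$ case (four-vertex count forcing degeneracy via \cite[Theorem 1.1]{al2015finite}, or the direct neighbour-collapsing argument showing the quotient is $\mathbf{C}_4$) is also valid and replaces the paper's use of the bipartition of $K_{4,4}$.
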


\begin{proof}
	By Definition~\ref{IndDef},  $G^+ = \langle \mu^2, \mu \nu, \sigma\rangle$ is an index 2  subgroup of $G=G(r,s)$.  Neither $\mu$ nor $\nu$ is contained in $G^+$ (since $G \neq G^+$, see Table \ref{IndDefTable}), while both $\mu^2$ and $\mu^{-2}(\mu\nu)^2 = \nu^2$ are contained in $G^+$. It  follows that for any element of the form $\mu^i\sigma^\epsilon\nu^j$ in $G^+$,  with $\epsilon \in \{0,1\}$,  $i$ and $j$ must have the same parity. To see this, suppose that this were not the case and that $\mu^i\sigma^\epsilon\nu^j\in G^+$ with $i$ even and $j$ odd. Then, noting that both $r$ and $s$ are even, appropriate repeated left multiplication by $\mu^2, \sigma$, and $\nu^{-2}$ in order would give $\nu \in G^+$, a contradiction. (If $i$ is odd and $j$ is even, an analogous argument gives the contradiction $\mu\in G^+$.) Thus $N^+ =\langle \nu^2\rangle$ and $M^+=\langle \mu^2\rangle$ are normal subgroups of $G^+$, and by \cite[Lemma 2.7(b)]{al2017cycle}, $\Gamma^+_{N^+}=C_r$ is a $G^+$-unoriented cycle and  $\Gamma^+_{M^+}=C_s$ is a  $G^+$-oriented cycle, and these are independent cyclic normal quotients, so we need only check that $(\Gamma^+, G^+)$  is basic of cycle type.
	
	First we consider the case $(\Gamma^+, G^+) = (\Gamma^+(4,4), G^+(4,4))$. It is easy to check (for instance using  \textsc{Magma}\cite{bosma1997magma}) that $G^+(4,4)$ has three minimal normal subgroups each having order 2. These are $M^+, N^+$ and $\langle \mu^2\nu^2\rangle$. It is also easy to check that $\Gamma^+(4,4) \cong K_{4,4}$. Letting $A = \{(0,0), (2,0), (2,2), (0,2)\}$ and $B = V\Gamma^+ \backslash A$ denote the bipartition of $\Gamma^+$, it is clear that each of these three minimal normal subgroups swaps the vertex $(0,0)$ with some vertex in $A$. In all three cases the normal quotient with respect to this subgroup is a 4-cycle so $(\Gamma^+, G^+)$ is basic of cycle type.

	Now suppose that $(r,s)$ is $(4,2p)$ or $ (2p,4)$, where $p$ is an odd prime.  Since $G^+$ is an index 2 subgroup of $G$, each minimal normal subgroup of $G^+$ satisfies case (i) or (ii) of Lemma \ref{Index2}. The minimal normal subgroups of $G^+$ which are minimal normal in $G$ are precisely $M^+=\langle \mu^2\rangle$ and  $N^+=\langle \nu^2 \rangle$. This can be checked using Lemma \ref{IndMinDrCr} and noting the fact that the central minimal normal subgroups of $G(2p, 4)$ and  $G(4,2p)$ are $\langle\mu^p\nu^2\rangle$ and $\langle\mu^2\nu^p\rangle$ respectively, and neither of these is contained in $G^+$ for the reason given in the first paragraph of the proof. Each of 
	$\Gamma^+_{M^+}$ and $\Gamma^+_{N^+}$ is a cyclic normal quotient of $\Gamma^+$. 
	
	We now show that $M^+$ and  $N^+$ are the only minimal normal subgroups of $G^+$, by showing that $G^+$ has no minimal normal subgroups of the form described in case (ii) of Lemma \ref{Index2}.
	To this end, suppose that $L$ is such a minimal normal subgroup of $G^+$, and that $L \times L^g$ is normal in $G$ (and contained in $G^+$) for some $g \in G\backslash G^+$. Since $G^+$ has order $8p$ this is only possible if $|L| = 2$. 
	
	Consider an element  $\gamma \in G^+$ of order 2. Then viewing $G$ as the direct product $G = \langle \mu, \sigma\rangle \times \langle \nu \rangle$, we may write $\gamma = (x,y)$ where $x $ is equal to $1, \mu^{r/2}, $ or $\mu^i\sigma$ for some $i$, and $y$ is equal to $1$ or $\nu^{s/2}$.  Suppose that $L = \langle \gamma \rangle $. Since $L$ is normal in $G^+$, we have $\gamma^{\mu\nu} = \gamma$, and hence $x^{\mu} =x$. Now $(\mu^i\sigma)^{\mu} = \mu^{i-2}\sigma\ne \mu^i\sigma$ since $|\mu|=r>2$, and it follows that $x \in \{1, \mu^{r/2}\}$. 
	Moreover, since $L\neq 1$ and $L$  is not a minimal normal subgroup of $G$, it follows that $\gamma = \mu^{r/2}\nu^{s/2}\in G^+$ (dropping the direct product notation).  Since one of $r, s$ equals $4$, it follows that one of $\mu^{r/2},\nu^{s/2}$, say $z$, lies in $G^+$, and hence also $z\gamma\in G^+$. Hence both of $\mu^{r/2},\nu^{s/2}$ lie in $G^+$, and this is a contradiction since one of $r, s$ equals $2p$ with $p$ an odd prime.  We conclude  that $M^+$ and  $N^+$ are the only minimal normal subgroups of $G^+$, and hence that $(\Gamma^+,G^+)$ is basic of cycle type.
\end{proof}

In order to produce similar results for rows 3 and 4 of Table \ref{IndTypesTable}, we need some preliminary theory on the groups $H(r,s)$. The following lemma describes minimal normal subgroups of the group $H(r,s)$ for certain parameters $(r,s)$.

\begin{Lemma} \label{HplusLemma}
	Let $p$ and $q$ be  odd primes, and let $H(r,s) = \langle \mu, \nu^2 ,\tau\sigma\nu, \tau \rangle$ as in Table~\ref{IndTypesTable}.
	\begin{enumerate}[(a)] 
		\item If $(r,s) = (p,4)$ then the minimal normal subgroups of $H(r,s)$ are $\langle \mu \rangle$ and $\langle \nu^2 \rangle$.
		\item If $(r,s) = (p,2q)$ then the minimal normal subgroups of $H(r,s)$ are $\langle \mu \rangle$ and $\langle \nu^2 \rangle$.
		\item If $(r,s) = (2p,2q)$ then the minimal normal subgroups of $H(r,s)$ are $\langle \mu^2\rangle $, $\langle \mu^p \rangle $ and $\langle \nu^2 \rangle$.  
		\item If $(r,s) = (2p,4)$ then the minimal normal subgroups of $H(r,s)$ are $\langle \mu^2\rangle$, $\langle \mu^p \rangle$, $\langle \nu^2 \rangle$ and $\langle \mu^p\nu^2 \rangle$.
		\item If $(r,s) = (4,2p)$ then the minimal normal subgroups of $H(r,s)$ are $\langle \mu^2\rangle $ and $\langle \nu^2 \rangle$.

	\end{enumerate}
\end{Lemma}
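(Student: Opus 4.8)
The plan is to work directly with the explicit structure $H := H(r,s) = (M\times N^{\#})\rtimes\langle\tau\rangle$ recorded in Definition~\ref{IndDef}(a), where $M = \langle\mu\rangle\cong C_r$ and $N^{\#} = \langle\nu^2,\tau\sigma\nu\rangle\cong D_{s/2}$. Writing $K := M\times N^{\#}$, this is a normal subgroup of index $2$ with $\tau\notin K$; moreover $\tau$ inverts $\mu$ (so $\mu^\tau = \mu^{-1}\ne\mu$), $\tau$ normalises $N^{\#}$ with $(\nu^2)^\tau = \nu^{-2}$, and $M$ centralises $N^{\#}$. I will use throughout that $M$, $N^{\#}$, and (by Remark~\ref{rem1} and the relations displayed there) all subgroups of $\langle\mu\rangle$ and of $\langle\nu^2\rangle$, are normal in $H$, and that $H$ is solvable, so every minimal normal subgroup of $H$ is elementary abelian of prime exponent $\ell$. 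The argument splits according to whether $\ell$ is odd or $\ell = 2$.

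First I would show that $H$ has a normal $2$-complement. Put $R := O_{2'}(M)\times O_{2'}(\langle\nu^2\rangle)$; this is characteristic in the normal subgroups $M$ and $\langle\nu^2\rangle$ of $H$, hence normal, and is abelian of odd order. A quick calculation shows no element of $K\tau$ has odd order: for $m\in M$, $z\in N^{\#}$ one computes $(mz\tau)^2 = zz^\tau\in N^{\#}$, while $mz\tau$ maps to the nontrivial involution of $H/K\cong C_2$; so $R$ is exactly the set of odd-order elements of $H$, and checking the five pairs $r\in\{p,2p,4\}$, $s\in\{4,2q\}$ against $|H| = 2rs$ shows $[H:R]$ is a power of $2$. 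Then, for any minimal normal subgroup $A$ of $H$ that is a $2$-group, $[A,R]\le A\cap R = 1$, so $H$ acts on $A$ through the $2$-group $H/R$; since a $2$-group acting on a nonzero $\mathbb{F}_2$-space fixes a nonzero vector, $A\cap Z(H)\ne 1$, and minimality forces $A\le Z(H)$, whence $|A| = 2$. So the minimal normal $2$-subgroups of $H$ are exactly the subgroups of order $2$ of $Z(H)$, and I would then compute $Z(H)$: every element of $H\setminus K$ inverts $\mu$, so $Z(H)\le K$, and $mz\in M\times N^{\#}$ is central if and only if $m^2 = 1$, $z\in Z(N^{\#})$, and $z^\tau = z$. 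Evaluating this in each case — using $Z(D_{s/2}) = 1$ when $s/2 = q$ is odd, and, when $s = 4$, that $N^{\#}\cong C_2^2$ is self-central with $\tau$ fixing $\nu^2$ and interchanging its two reflections — pins down $Z(H)$, hence the order-$2$ entries of lists (a)--(e) (it is exactly here that the extra subgroup $\langle\mu^p\nu^2\rangle$ appearing in case (d) arises).

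For the odd case, I would observe that any minimal normal subgroup $A$ of $H$ of odd order lies in the Hall $2'$-subgroup $R$, which in cases (a)--(e) is cyclic of order $p$ or $q$, or elementary abelian of order $p^2$ when $p = q$. When $R$ is cyclic of prime order it is its own unique minimal normal subgroup. When $R\cong C_p^2$, the conjugation action of $H$ preserves the two normal subgroups $O_{2'}(M)$ and $O_{2'}(\langle\nu^2\rangle)$, acting on each by $\pm 1$ only (from $\tau$ on $M$, and from $\tau$ and $\tau\sigma\nu$ on $\langle\nu^2\rangle$), so the only $H$-invariant lines, hence the only minimal normal subgroups of $H$ inside $R$, are $O_{2'}(M)$ and $O_{2'}(\langle\nu^2\rangle)$ themselves. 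In every case these are $\langle\mu^{r/p}\rangle$ for the odd prime $p\mid r$ and $\langle\nu^{s/q}\rangle$ for the odd prime $q\mid s/2$; since all the subgroups found (in both the odd and the $\ell=2$ cases) have prime order, they are automatically minimal normal. Reading off the two contributions for each of the five parameter pairs then yields precisely lists (a)--(e).

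The hard part is ruling out minimal normal $2$-subgroups of order $> 2$: in cases (d) and (e) the Sylow $2$-subgroup has order $16$, so there is no cheap bound on $|O_2(H)|$, and an exhaustive enumeration of normal $2$-subgroups would be messy. The normal-$2$-complement step sidesteps this, but it genuinely needs the case check that $[H:R]$ is a $2$-power, and it needs careful bookkeeping of the $\tau$-action on $N^{\#} = D_{s/2}$ in the computation of $Z(H)$, in particular the contrast between $s = 4$ (where $N^{\#}$ is elementary abelian and donates a central $\langle\nu^2\rangle$) and $s = 2q$ with $q$ odd (where $Z(N^{\#}) = 1$). A minor wrinkle is the coincidence $p = q$ in cases (b) and (c), dealt with by the linear-algebra observation above; alternatively the odd-order part could be organised through Lemma~\ref{IndMinDrCr} applied to $K\cong C_r\times D_{s/2}$ together with Lemma~\ref{Index2}.
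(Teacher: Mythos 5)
Your proposal is correct and arrives at the right lists in all five cases, but by a genuinely different route from the paper. The paper first confines every minimal normal subgroup to an explicit index-$2$ subgroup via a centraliser computation --- $C_H(\langle\mu\rangle)$ or $C_H(\langle\mu^2\rangle)$ in cases (a)--(d), giving $H_0=\langle\mu,\nu^2,\tau\sigma\nu\rangle\cong C_r\times D_{s/2}$, and $C_H(\langle\nu^2\rangle)\cong C_p\times D_4$ in case (e) --- and then enumerates the minimal normal subgroups of that subgroup using Lemma~\ref{IndMinDrCr}, checking which ones are $\tau$-invariant and which pair up into products $K\times K^\tau$ (the dichotomy of Lemma~\ref{Index2}). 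You instead split by the prime dividing the order of the minimal normal subgroup: your normal $2$-complement $R=O_{2'}(\langle\mu\rangle)\times O_{2'}(\langle\nu^2\rangle)$ forces every minimal normal $2$-subgroup into $Z(H)$, hence to have order $2$, reducing the even case to a computation of $Z(H)$; the odd case reduces to the $H$-invariant subgroups of $R$. This is a clean conceptual division that avoids listing the involutions of $H_0$ case by case, at the price of verifying that $[H:R]$ is a $2$-power and of the $\tau$-action bookkeeping on $N^{\#}$, both of which you handle correctly. Two points should be tightened. First, in cases (b) and (c) with $p\neq q$ the Hall $2'$-subgroup $R$ is cyclic of order $pq$, not of prime order as you state; the fix is immediate (a cyclic group has a unique, hence characteristic, subgroup of each order, so the only candidates are its two Sylow subgroups, each of prime order), but as written that case is not covered by either of your two alternatives. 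Second, in the $p=q$ case the assertion that $H$ acts ``by $\pm1$ on each factor'' does not by itself rule out extra invariant lines: if every element acted as $\pm I$ then every line of $R\cong\mathbb{F}_p^2$ would be invariant. You need the explicit observation that $\tau\sigma\nu$ centralises $\mu$ while inverting $\nu^2$, so it acts as $\mathrm{diag}(1,-1)$, whose eigenspaces for odd $p$ are exactly the two coordinate axes.
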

\begin{proof}
	By Definition \ref{IndDef}(a), we may view $H: = H(r,s)$ as $H = \langle \mu \rangle \times \langle \nu^2, \tau\sigma\nu\rangle \rtimes \langle \tau \rangle \cong (C_r \times D_{s/2}) \rtimes C_2$. Note that the order of $H$ is $2rs$. In cases $(a)$ and $ (b)$, $\langle \mu \rangle$ and $\langle \nu^2 \rangle$ are minimal normal subgroups of $H$, while for each of the remaining cases, $\langle \mu^2 \rangle$, $\langle \mu^{r/2}\rangle $ and $\langle \nu^2 \rangle$ are minimal normal subgroups of $H$ (of course, $\langle \mu^2 \rangle$ and  $\langle \mu^{r/2}\rangle $ are equal if $r = 4$).
	Finally, if $(r,s) = (2p,4)$ then $\langle \mu^p\nu^2\rangle$ is also a minimal normal subgroup of $H$ since it is generated by a central involution.
	
	We will now show that in each of these cases, the minimal normal subgroups of $H$ mentioned above are the only minimal normal subgroups of $H$. In cases $(a) - (d)$, if we compute the centraliser of $M$ in $H$,  where $M := \langle \mu \rangle$ in cases $(a)$ and $(b)$, and $M := \langle \mu^2 \rangle$ in cases $(c)$ and $(d)$, we get $C_H(M) =  \langle \mu, \nu^2, \tau\sigma\nu\rangle $. Thus in cases  $(a) - (d)$, since distinct minimal normal subgroups centralise each other, all minimal normal subgroups of $H(r,s)$ are contained in $H_0:= \langle \mu, \nu^2, \tau\sigma\nu\rangle \cong C_r \times D_{s/2}$, and we note that $|H_0| = rs$ and $H=\langle H_0, \tau\rangle$. 
	
	Suppose now that $L$ is a minimal normal subgroup of $H$ which is not minimal normal in $H_0$. In this case, $L$ contains a  proper subgroup $K$ such that $K$ is minimal normal in $H_0$. Then $K^\tau$ is also a minimal normal subgroup of $H_0^\tau = H_0$ and so $\langle K, K^\tau \rangle = K \times K^\tau$ is a normal subgroup of $H$. Since $K \leq L$ and $K^\tau \leq L^\tau = L$, we have  $K \times K^\tau \leq L$ and $K \times K^\tau$ is normalised by $\langle H_0, \tau\rangle =H$. Hence $K \times K^\tau = L$, so every minimal normal subgroup of $H$ is either minimal normal in $H_0$ or is the direct product of two isomorphic minimal normal subgroups of $H_0$ which are interchanged by $\tau$. It thus suffices to check all minimal normal subgroups of $H_0$ and determine which of these give rise to  minimal normal subgroups of $H$ in this way.

	If $(r,s) = (p,4)$, then $H_0 \cong C_p \times C_2^2$ and we may check that the minimal normal subgroups of $H_0$ are $\langle \mu \rangle$, $\langle \nu^2 \rangle$, $K_1:= \langle \tau\sigma\nu \rangle$ and $K_2:= \langle \tau\sigma\nu^{-1} \rangle$. Note that conjugation by $\tau$ interchanges the subgroups $K_1$ and $K_2$; however $K_1\times K_2$ contains $\langle \nu^2 \rangle$ and so $K_1\times K_2$ is not a minimal normal subgroup of $H$. Thus $\langle \mu \rangle$ and $\langle \nu^2 \rangle$ are the only minimal normal subgroups of $H$. 
	
	If $(r,s) = (p,2q)$, then $H_0 \cong C_p \times D_{q}$ and,  by Lemma \ref{IndMinDrCr}, the minimal normal subgroups of $H_0$ are $\langle \mu \rangle $ and $\langle \nu^2 \rangle$ and both are minimal normal in $H$.
	If $(r,s) = (2p,2q)$, then $H_0 \cong C_{2p} \times D_{q}$ and,  by Lemma \ref{IndMinDrCr}, the minimal normal subgroups of $H_0$ are  $\langle \mu^2 \rangle$, $\langle \mu^p \rangle$ and $\langle \nu^2 \rangle$ and each of these is minimal normal in $H$.
	
	If $(r,s) = (2p,4)$, then $H_0 \cong C_{2p}  \times D_2 \cong C_p \times C_2^3$ is abelian with order $8p$. Thus $\langle \mu^2 \rangle$ is the unique subgroup of of $H_0$ of order $p$. The other minimal normal subgroups of $H_0$ are all generated by involutions and hence are $\langle \mu^p \rangle$, $\langle \nu^2 \rangle$ and $\langle \mu^p\nu^2 \rangle$, as well as $K_1:= \langle \tau\sigma\nu \rangle$,  $K_2:= \langle \tau\sigma\nu^{-1} \rangle$, $J_1:= \langle \mu^p\tau\sigma\nu \rangle$ and $J_2:= \langle \mu^p\tau\sigma\nu^{-1} \rangle$. Now conjugation by $\tau$ in $H$ fixes $\langle \mu^2 \rangle$, $\langle \mu^p \rangle$, $\langle \nu^2 \rangle$ and $\langle \mu^p\nu^2 \rangle$, and  swaps $K_1$ with $K_2$, and $J_1$ with $J_2$. Moreover, both $K_1 \times K_2$ and $J_1 \times J_2$ contain the subgroup $\langle \nu^2 \rangle$ and therefore are not minimal normal in $H$. Thus the only minimal normal subgroups of $H$ are $\langle \mu^2 \rangle$, $\langle \mu^p \rangle$, $\langle \nu^2 \rangle$ and $\langle \mu^p\nu^2 \rangle$.
	Hence we have proved the result for cases $(a) - (d)$.
	
	Finally, in case $(e)$, we have $(r,s) = (4,2p)$ with $|H| = 2rs = 16p$. Now letting $H_1 := C_H(\nu^2)$, we see that $H_1 \geq \langle \mu, \nu^2, \sigma \nu^{-1} \rangle$. In particular since $\nu^2$ is not central in $H$, we have $16p = |H| > |H_1| = |\langle \mu, \nu^2, \sigma \nu^{-1} \rangle| > |\langle \mu, \nu^2 \rangle| = |C_4 \times C_p| =  4p$. This implies that $|H_1| = 8p$, $H_1$ is an index 2 subgroup of $H$, and $H_1 = \langle \mu, \nu^2, \sigma \nu^{-1} \rangle$. In particular, all minimal normal subgroups of $H$ are contained in $H_1$.
	
	Now notice that since $p$ is odd,   we have $\nu^{p+1} \in H_1$,  and so $(\sigma\nu^{-1})\nu^{p+1} = \sigma \nu^p \in H_1$ and this is an involution. Moreover,  $\langle \mu, \sigma \nu^p\rangle \cong D_4$ is normal in $H_1$. In particular $H_1 = \langle \nu^2 \rangle \times \langle \mu, \sigma \nu^p\rangle \cong C_p \times D_4$, and so by Lemma \ref{IndMinDrCr} 
	the minimal normal subgroups of $H_1$ are $\langle \nu^2 \rangle$ and $\langle \mu^2 \rangle$. If $L$ is a minimal normal subgroup of $H$ which is not  minimal normal in $H_1$ then again $L = K \times K^\tau$, for $K,  K^\tau$ isomorphic minimal normal subgroups of $H_1$. In particular, no such subgroup  $L$ exists since both $\langle \nu^2 \rangle$ and $\langle \mu^2 \rangle$ are fixed by conjugation by $\tau$, and so these are the only minimal normal subgroups of $H$.
\end{proof}

	In Propositions~\ref{IndHMinSubs} and~\ref{IndHMinSubs2} we apply the information in Lemma~\ref{HplusLemma} to analyse the pairs $(\Gamma(r,s), H(r,s))$ described in Rows 3 and 4 of Table~\ref{IndTypesTable}. Our proofs rely on \cite[Lemma 2.9]{al2017cycle} which specifies (with $M, N^\#, M^+, N^+$ as in Definition~\ref{IndDef}):
	\begin{itemize}
		\item if $r$ is odd then $\Gamma_M$ and $\Gamma_{N^{\#}}$ are independent $H(r,s)$-unoriented cyclic normal quotients; while
		\item  if $r$ is even then $\Gamma_{M^+}$ and $\Gamma_{N^{+}}$ are independent $H^+(r,s)$-unoriented cyclic normal quotients. 
	\end{itemize}

\begin{Proposition} \label{IndHMinSubs}
	Suppose that $\Gamma:= \Gamma(r,s)$ and $H:= H(r,s) = \langle \mu, \nu^2 ,\tau\sigma\nu, \tau \rangle$  are as described in  Row 3 of Table \ref{IndTypesTable}.  If $(r,s) $ is of the form $ (p,4)$ or $(p,2q)$ where $p$ and $q$ are  odd primes then $(\Gamma, H)$ is basic of independent-cycle type.
\end{Proposition}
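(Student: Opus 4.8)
The pair $(\Gamma,H)$ belongs to $\OG(4)$ by hypothesis (it is one of the rows of Table~\ref{IndTypesTable}), so there are two things to establish: that $(\Gamma,H)$ has independent cyclic normal quotients, and that $(\Gamma,H)$ is basic. For the first, I would simply invoke the result \cite[Lemma 2.9]{al2017cycle} quoted just above the proposition: since $r=p$ is odd, $\Gamma_M$ and $\Gamma_{N^{\#}}$ (with $M=\langle\mu\rangle\cong C_r$ and $N^{\#}$ as in Definition~\ref{IndDef}(a)) are independent $H$-unoriented cyclic normal quotients. In particular $(\Gamma,H)$ is of independent-cycle type, and $\Gamma_M$ is a cycle.

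For basicness I would follow the strategy of this section: it suffices to show that $\Gamma_L$ is a cycle for every minimal normal subgroup $L$ of $H$, because every nontrivial normal subgroup contains such an $L$, and a normal quotient of a cycle is again degenerate. By Lemma~\ref{HplusLemma}(a),(b), in both of the cases $(r,s)=(p,4)$ and $(r,s)=(p,2q)$ the minimal normal subgroups of $H$ are exactly $M=\langle\mu\rangle$ and $J:=\langle\nu^2\rangle$. The case $L=M$ is already settled by the previous paragraph, so the whole argument reduces to the single computation that $\Gamma_J$ is a cycle.

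To carry that out I would describe the $J$-orbits explicitly. Since $s$ is even and, crucially, the vertex set of $\Gamma$ in Row~3 is the full set $X=\Z_r\times\Z_s$, the orbit of a vertex $(i,j)$ under $J=\langle\nu^2\rangle$ is $B_i^{\varepsilon}=\{(i,j')\mid j'\equiv\varepsilon\ (\mathrm{mod}\ 2)\}$, where $\varepsilon$ is the parity of $j$; thus $\Gamma_J$ has $2r$ vertices $B_i^{\varepsilon}$ with $i\in\Z_r$ and $\varepsilon\in\Z_2$. A vertex of $B_i^{\varepsilon}$ has its four neighbours among $B_{i-1}^{1-\varepsilon}\cup B_{i+1}^{1-\varepsilon}$, so in $\Gamma_J$ the vertex $B_i^{\varepsilon}$ is adjacent precisely to $B_{i-1}^{1-\varepsilon}$ and $B_{i+1}^{1-\varepsilon}$, which are distinct since $r=p\ge 3$. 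Hence $\Gamma_J$ is connected (because $\Gamma$ is) and $2$-regular, so it is a cycle, namely $\mathbf{C}_{2r}$. Combined with the first paragraph this shows $(\Gamma,H)$ is basic of independent-cycle type. I do not anticipate a serious obstacle: the only points needing care are recognising that for Row~3 the vertex set is the full $X$ (so the $J$-orbits split according to the parity of the second coordinate, unlike in the $\Gamma^{+}$ cases of Rows~2 and~4), and checking that the two neighbouring orbits of each quotient vertex are genuinely different, which is exactly where $r\ge 3$ is used.
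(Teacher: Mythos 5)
Your proposal is correct and follows essentially the same route as the paper: invoke Lemma~\ref{HplusLemma}(a),(b) to see that $\langle\mu\rangle$ and $\langle\nu^2\rangle$ are the only minimal normal subgroups, use \cite[Lemma 2.9(a)]{al2017cycle} for the independent cyclic quotients (which handles $\Gamma_M$), and then verify directly that the $\langle\nu^2\rangle$-orbits are the parity classes in the second coordinate and that each has exactly two distinct neighbouring orbits, so the quotient is a cycle. Your extra remarks (the explicit reduction to minimal normal subgroups and the $2$-regular-plus-connected argument) only make explicit what the paper leaves implicit.
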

\begin{proof}
	
	
	By parts (a) and (b) of Lemma \ref{HplusLemma}, if $(r,s)$ is as described in the assumption, then the  minimal normal subgroups of $H$ are $M:= \langle \mu \rangle$ and $N:= \langle \nu^2 \rangle$. By \cite[Lemma 2.9(a)]{al2017cycle}, $\Gamma_M$ and $\Gamma_{N^{\#}}$ are independent cyclic normal quotients, where $N^{\#}=\left\langle \nu^{2}, \tau \sigma \nu\right\rangle$. Moreover, the $N^\#$-orbits are $B_{i}=\left\{(i, j) \mid j \in \mathbb{Z}_{s}\right\}, \text { for } i \in \mathbb{Z}_{r}$, and the $N$-orbits (which are contained in the $N^\#$-orbits) are $B_{i,0} = \{(i,2k), \mid k \in \mathbb{Z}_s\}$ and $B_{i,1} = \{(i,2k+1), \mid k \in \mathbb{Z}_s\}$ for $i \in \mathbb{Z}_p$. Note that the $N$-orbit  $B_{0,0}$ has just two neighbours in $\Gamma_N$, namely $B_{1,1}$ and $B_{-1,1}$. Therefore $\Gamma_N$ is a cycle and  $(\Gamma, H)$ is basic of independent-cycle type.
\end{proof}

\begin{Proposition}\label{IndHMinSubs2}
	Suppose that $\Gamma^+:= \Gamma^+(r,s)$ and $H^+:= H^+(r,s)$ are as described in Row 4 of Table \ref{IndTypesTable}. If $(r,s) $ is of the form $(4,4), (4,2p),  (2p,4)$ or $(2p,2q)$ where $p$ and $q$ are odd primes then $(\Gamma^+, H^+)$ is basic of independent-cycle type. 
\end{Proposition}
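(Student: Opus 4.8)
The plan is to mirror the arguments already used for Rows~1 and~2. By Remark~\ref{rem1} the subgroups $M^+=\langle\mu^2\rangle$ and $N^+=\langle\nu^2\rangle$ are normal in $H^+$, and by \cite[Lemma 2.9]{al2017cycle} (the case $r$ even) the quotients $\Gamma^+_{M^+}$ and $\Gamma^+_{N^+}$ are independent $H^+$-unoriented cyclic normal quotients; hence $(\Gamma^+,H^+)$ is of independent-cycle type and it only remains to prove that it is basic. For this it suffices to show that $\Gamma^+_L$ is a cycle for every minimal normal subgroup $L$ of $H^+$: if $K$ is any nontrivial normal subgroup of $H^+$ then $K$ contains some such $L$, and since the $K$-orbits are unions of $L$-orbits the graph $\Gamma^+_K$ is a graph quotient of the cycle $\Gamma^+_L$, so has valency at most $2$ and cannot lie in $\OG(4)$; by \cite[Theorem 1.1]{al2015finite} it is therefore degenerate.

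For $(r,s)=(4,4)$ I would argue directly: $\Gamma^+(4,4)\cong K_{4,4}$ has only $8$ vertices, and $H^+(4,4)$ is vertex-transitive with vertex stabiliser $C_2$ by Theorem~\ref{IndTypesThm}, so every nontrivial normal subgroup of $H^+(4,4)$ acts without fixed points. Hence every proper normal quotient of $\Gamma^+(4,4)$ has at most $4$ vertices, so it is not a connected $4$-valent graph and is therefore degenerate by \cite[Theorem 1.1]{al2015finite}; thus $(\Gamma^+(4,4),H^+(4,4))$ is basic. (This can also be confirmed by a short computation, e.g.\ in \textsc{Magma}~\cite{bosma1997magma}.)

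For $(r,s)\in\{(4,2p),(2p,4),(2p,2q)\}$ I would use that $H^+=\langle\mu^2,\sigma\mu\nu,\tau\rangle$ is (isomorphic to) an index-$2$ subgroup of $H=H(r,s)$, and apply Lemma~\ref{Index2}: every minimal normal subgroup $L$ of $H^+$ is either (i)~minimal normal in $H$ and contained in $H^+$, or (ii)~of the form $L\times L^g$ with $L\times L^g$ normal in $H$ and contained in $H^+$, for some $g\in H\setminus H^+$. For case~(i) I would combine Lemma~\ref{HplusLemma}(c),(d),(e) with the fact that an element of $H$ lies in $H^+$ only if its $\mu$-exponent and $\nu$-exponent have the same parity; this excludes the extra minimal normal subgroups $\langle\mu^p\rangle$ and $\langle\mu^p\nu^2\rangle$ of $H(2p,4)$ (and note $\langle\mu^p\nu^q\rangle$ is not even a subgroup of $H$), leaving only $M^+$ and $N^+$. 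For case~(ii) I would check that $H$ has no normal subgroup isomorphic to $C_2^{\,2}$ contained in $H^+$, and that the only normal $p$-subgroup of $H$ of order exceeding $p$, namely $\langle\mu^2,\nu^2\rangle$ when $p=q$, has only $\langle\mu^2\rangle$ and $\langle\nu^2\rangle$ as $H^+$-invariant lines; so case~(ii) contributes nothing. Hence the minimal normal subgroups of $H^+(r,s)$ are exactly $M^+$ and $N^+$, both of which give cyclic normal quotients by \cite[Lemma 2.9]{al2017cycle}, and $(\Gamma^+,H^+)$ is basic.

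The main obstacle is the bookkeeping in the last paragraph: pinning down all minimal normal subgroups of the somewhat intricate groups $H^+(r,s)$ and verifying that none of the ``extra'' minimal normal subgroups of $H(r,s)$ from Lemma~\ref{HplusLemma}(d), nor any doubled subgroup from Lemma~\ref{Index2}(ii), survives inside $H^+(r,s)$. Computing the $H$-conjugacy classes of the candidate small subgroups, and the induced $H^+$-action on them, is routine but error-prone; a helpful sanity check is that, by \cite[Theorem 1.1]{al2015finite}, any surviving extra minimal normal subgroup would force a $4$-valent normal quotient lying in $\OG(4)$ and hence contradict basicness, which both guides and double-checks the case analysis.
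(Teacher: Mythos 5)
Your overall strategy coincides with the paper's (reduce to minimal normal subgroups, use Lemma~\ref{Index2} together with Lemma~\ref{HplusLemma}, and rule out type~(ii) subgroups), and your treatment of $(4,4)$ via a vertex count is a clean alternative to the paper's \textsc{Magma} check. However, your proposed verification for case~(ii) with $|L|=2$ rests on a false claim. You say you would ``check that $H$ has no normal subgroup isomorphic to $C_2^{\,2}$ contained in $H^+$''; this is not true. For $(r,s)=(2p,4)$ the subgroup $\langle \mu^p\tau\sigma\nu,\ \nu^2\rangle\cong C_2^{\,2}$ is central in $H_0=\langle\mu,\nu^2,\tau\sigma\nu\rangle$, is normalised by $\tau$ (which sends $\mu^p\tau\sigma\nu$ to $\mu^p\tau\sigma\nu^{-1}=\mu^p\tau\sigma\nu\cdot\nu^{-2}$) and by $\sigma\nu$, hence is normal in $H$; and both generators lie in $H^+$ (indeed $\tau\cdot(\sigma\mu\nu)\cdot\mu^{p-1}=\tau\sigma\nu\mu^p$ and $(\sigma\mu\nu)^2\mu^{-2}=\nu^2$). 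Similarly for $(r,s)=(4,2p)$ the Klein subgroup $\langle\mu^2,\ \sigma\nu^p\mu\rangle$ of $C_H(\nu^2)\cong C_p\times D_4$ is normal in $H$ and contained in $H^+$. So the check you propose would fail, and your argument that ``case~(ii) contributes nothing'' does not close.

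What saves the day — and what the paper actually does — is a finer analysis: a type-(ii) subgroup $L$ must itself be \emph{normal in $H^+$} and of order $2$, hence central in $H^+$, hence contained in the relevant centraliser ($H_0$ or $H_1$); one then lists the involutions of that centraliser lying in $H^+$ and observes that, apart from the generator of $N^+$ (resp.\ $M^+$), they come in pairs interchanged by conjugation by $\tau\in H^+$ (e.g.\ $\tau\sigma\nu\mu^p\leftrightarrow\tau\sigma\nu^3\mu^p$), so none of them generates a normal subgroup of $H^+$. In other words, the normal $C_2^{\,2}$'s of $H$ inside $H^+$ do exist, but they never arise as $L\times L^g$ with $L$ minimal normal in $H^+$. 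Your plan omits exactly this step, so as written there is a genuine gap in the cases $(2p,4)$ and $(4,2p)$. Your sketch for the $|L|=p$ subcase of $(2p,2q)$ (identifying $\langle\mu^2,\nu^2\rangle$ as the only candidate and checking its $H^+$-invariant lines) is the right idea and matches the paper's computation with $\sigma\mu\nu$.
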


\begin{proof}
	Note that   $H^+ = \langle \mu^{2}, \sigma \mu \nu, \tau \rangle$ is an index 2 subgroup of $H:= H(r,s) = \langle \mu, \sigma \nu, \tau \rangle$, and that $(\sigma\mu\nu)^2 = \nu^2$. In particular,  $H = \langle H^+, \mu \rangle$, and  $H^+$ has order $rs$. Moreover, $M^+:=\langle \mu^2 \rangle, N^+:=\langle \nu^2 \rangle$ are normal subgroups of $H^+$, and by \cite[Lemma 2.9(b)]{al2017cycle}, $\Gamma^+_{M^+}$ and $\Gamma^+_{N^+}$ are independent cyclic normal quotients.  So it is sufficient to prove that $(\Gamma^+, H^+)$ is basic of cycle type. 
	
	Suppose first that $(r,s) = (4,4)$. We check (eg. using  \textsc{Magma} \cite{bosma1997magma}) that $H^+(4,4)$ has three minimal normal subgroups each of order 2. These are $M^+, N^+$ and $\langle \mu^2\nu^2\rangle$. It is also easy to check that $\Gamma^+(4,4) \cong K_{4,4}$. Letting $A = \{(0,0), (2,0), (2,2), (0,2)\}$ and $B = V\Gamma^+ \backslash A$ denote the bipartition of $\Gamma^+$, it is clear that each of these three minimal normal subgroups swaps the vertex $(0,0)$ with some vertex in $A$. Thus in all three cases the normal quotient with respect to this subgroup produces a 4-cycle so $(\Gamma^+, H^+)$ is basic of cycle type.
	
	Now suppose that  $(r,s)$ has one of the three other forms stated in the assumption.
	Let $L$ be a minimal normal subgroup of $H^+$. Since $H^+$ is an index 2 subgroup of $H$ we may again apply Lemma \ref{Index2}.  In particular  either (i) $L$ is a minimal normal subgroup of $H$ contained in $H^+$; or (ii) $L$ is not normal in $H$ and $L\times L^h$ is a normal subgroup of $H$ contained in $H^+$, where $h \in H \backslash H^+$. In each of the  following three cases for $(r,s)$, we determine all possibilities for $L$ and show that for each of these the corresponding normal quotient $\Gamma_L$ is a cycle. 

	First suppose that $(r,s) = (2p,4)$ for some odd prime $p$.  By Lemma \ref{HplusLemma}, the minimal normal subgroups of $H$ are $M^+$, $\langle \mu^p \rangle$, $N^+$ and $\langle \mu^p\nu^2 \rangle$, and of these, only $M^+$ and $N^+$ are contained in $H^+$. Moreover, $M^+$ and $N^+$ are both minimal normal in $H^+$,  and for these two subgroups the corresponding normal quotients are cycles by \cite[Lemma 2.9(b)]{al2017cycle}. Suppose that $H^+$ also has a minimal normal subgroup $L$ of type (ii). Since $|H^+|=8p$, the order $|L| = 2$.  In particular, $L \leq Z(H^+)$, and so $L \leq Z(H^+) \leq C_{H^+}( \langle \mu^2 \rangle)\leq C_{H}( \langle \mu^2 \rangle) = H_0:= \langle  \mu, \nu^2, \tau\sigma\nu \rangle \cong  C_{2p} \times C_2^2\cong C_p\times C_2^3$. It is easy to check that the only involutions in $H_0$ contained in $H^+$ are $\nu^2, \tau\sigma\nu\mu^p$, and $\tau\sigma\nu^3\mu^p$. (To check the latter two elements, note that $ \tau, \sigma\mu\nu, \mu^{p-1}, \nu^2 \in H^+$, and $\tau \cdot (\sigma\mu\nu) \cdot \mu^{p-1} = \tau\sigma\nu\mu^p$, while $\tau \cdot (\sigma\mu\nu) \cdot \mu^{p-1} \cdot \nu^2 = \tau\sigma\nu^3\mu^p$.)
	Since $\tau\sigma\nu\mu^p$ and $\tau\sigma\nu^3\mu^p$ are interchanged under conjugation by $\tau$,  it follows that neither generates a normal subgroup of $H^+$. Moreover, we have already seen that $\nu^2$ generates the normal subgroup $N^+$ of $H^+$ of type (i). Hence there are no minimal normal subgroups of $H^+$ of type (ii), and we conclude that  $(\Gamma^+, H^+)$ is basic of cycle type.

	Next let $(r,s) = (4,2p)$ for some odd prime $p$. By Lemma \ref{HplusLemma}, the minimal normal subgroups of $H$ are $M^+:= \langle \mu^2\rangle $ and $N^+:= \langle \nu^2 \rangle$, these are both minimal normal in $H^+$, and the corresponding normal quotients are cycles by \cite[Lemma 2.9(b)]{al2017cycle}.  Suppose that $H^+$ also has a minimal normal subgroup $L$ of type (ii).  Then $L$ is not normal in $H$, and since $|H^+| = 8p$, it follows as in the previous case that $|L| = 2$ and $L\leq Z(H^+) \leq C_{H^+}(\langle \nu^2\rangle ) \leq C_H(\langle \nu^2\rangle ) = H_1:= \langle \nu^2 \rangle \times \langle \mu, \sigma \nu^p\rangle \cong C_p \times D_4$ (the last part follows from the last paragraph of the proof of Lemma \ref{HplusLemma}). The involutions in $H_1$ are $\mu^2$ and $\sigma\nu^p\mu^i$ for $i \in\{0,1,2,3\}$.  Of these, only $\mu^2$, $\sigma\nu^p\mu$ and $\sigma \nu^p \mu^{3}$ are contained in $H^+$. Since conjugation by $\tau \in H^+$ swaps $\sigma\nu^p\mu$ and $\sigma \nu^p \mu^{3}$, neither of these two involutions generates a normal subgroup of $H^+$, and $\mu^2$ generates the normal subgroup $M^+$ of type (i). Thus there are no minimal normal subgroups of $H^+$ of type (ii), and we conclude that  that  $(\Gamma^+, H^+)$ is basic of cycle type.

	Finally suppose that $(r,s) = (2p,2q)$ for  odd primes $p,q$ so $|H^+| = 4pq$.  By Lemma \ref{HplusLemma}, the minimal normal subgroups of $H$ are  $\langle \mu^2\rangle $, $\langle \mu^p \rangle $ and $\langle \nu^2 \rangle$. Of these $M^+ = \langle \mu^2\rangle$ and $N^+ = \langle \nu^2 \rangle$ are contained in $H^+$; they are minimal normal in $H^+$, and the corresponding normal quotients are cycles by \cite[Lemma 2.9(b)]{al2017cycle}. Suppose that $H^+$ also has a minimal normal subgroup  $L$ of type  (ii), so $L$ is not normal in $H$. Then $|L|^2$ divides $|H^+|$ and so either $|L| = 2$, or $|L| = p$ (if $p=q$). 
	If $|L| = 2$ then $L \leq C_{H^+}(S)$ where $S := \langle \mu^2\rangle \times \langle \nu^2 \rangle$.  Now $|H^+ : S| = 4$ and the set of $S$-cosets in $H^+$ is $ \{S, S\tau, S\sigma\mu\nu, S\sigma\mu\nu\tau\}$. Conjugation by any element of $S\tau \cup S\sigma\mu\nu$ inverts $\mu^2$, while conjugation by any element of  $S\sigma\mu\nu\tau$ inverts $\nu^2$. Hence $C_{H^+}(S) = S$. Thus $L \leq S \cong C_p \times C_q$, which is a contradiction since $S$ contains no involutions. Hence $|L|\ne 2$, and so $p=q$ and $|L|=p$. 
	
	Therefore we have  $|H^+|  =4p^2$ and  $L^* := L\times L^\mu\leq H^+$ with $L^*$ normal in $H$ and $|L^*|=p^2$. It follows that $L^*$ is the unique Sylow $p$-subgroup of $H^+$ and hence $L^*$ contains $\mu^2$ and $\nu^2$ (elements of order $p$). This implies that $L^* = \langle \mu^2 \rangle \times \langle \nu^2 \rangle  \cong C_p \times C_p$. Also $L \neq \langle \mu^2 \rangle , \langle \nu^2 \rangle$ since $L$ is not normal in $H$, and so $L = \langle \mu^2\nu^{2j}\rangle$ for some $j \in \{1,\dots, p-1\}$. Since $L$ is normalised by $\sigma \mu \nu \in H^+$, 
	we have $(\mu^2\nu^{2j})^{\sigma\mu\nu} = \mu^{-2}\nu^{2j} \in L$, and hence $\mu^2\nu^{2j}\cdot \mu^{-2}\nu^{2j} = \nu^{4j} \in L$. Since $|L|=p$ is an odd prime and $\gcd(4j,p)=1$, this implies that $\nu\in L$, which is a contradiction.   Thus there are no minimal normal subgroups of $H^+$ of type (ii), and we conclude that  that  $(\Gamma^+, H^+)$ is basic of cycle type. 
\end{proof}

Finally we deal with Row 5 of Table \ref{IndTypesTable}.

\begin{Proposition}
	Suppose that $\Gamma_2:= \Gamma_2(r,s)$ and $G_2:= G_2(r,s)$ are as described in Row 5 of Table \ref{IndTypesTable}. If $(r,s) =(p,q)$ where $p$ and $q$ are odd primes, then $(\Gamma_2, G_2)$ is basic of independent-cycle type. 
\end{Proposition}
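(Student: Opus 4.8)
### Proof strategy for Row 5

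The plan is to mirror the structure of the proofs for Rows 1--4, but now working with $G_2 := G_2(r,s) = \widehat{M}\times\widehat{N}$ where $\widehat{M} = \langle \mu, \sigma\tau\rangle\cong D_r$ and $\widehat{N} = \langle\nu,\sigma\rangle\cong D_s$, with $r=p$ and $s=q$ both odd primes. First I would recall from \cite[Lemma 2.7]{al2017cycle} (or the relevant lemma of \cite{al2017cycle} quoted for $G_2(r,s)$) that $(\Gamma_2)_{\langle\mu\rangle}$ and $(\Gamma_2)_{\langle\nu\rangle}$ are independent cyclic normal quotients (one oriented, one unoriented), so that the pair is indeed of independent-cycle type. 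Thus, as in the earlier propositions, it remains only to verify that the pair is \emph{basic} of cycle type, i.e.\ that $(\Gamma_2)_L$ is a cycle for every minimal normal subgroup $L$ of $G_2$.

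The key point is that since $r=p$ and $s=q$ are both \emph{odd} primes, $\widehat{M}\cong D_p$ and $\widehat{N}\cong D_q$ each have a \emph{unique} minimal normal subgroup, namely the cyclic group of order $p$ (respectively $q$) generated by the rotation. By Lemma~\ref{IndMinDrCr} (applied with $H=\widehat{M}$, $K=\widehat{N}$, both dihedral of odd order, so case (c) is vacuous), the only minimal normal subgroups of $G_2 = \widehat{M}\times\widehat{N}$ are $M := \langle\mu\rangle\cong C_p$ and $N := \langle\nu\rangle\cong C_q$. Note here the generators of $\widehat{M}$ and $\widehat{N}$ must be identified carefully: $\langle\mu\rangle$ is the rotation subgroup of $\widehat{M}=\langle\mu,\sigma\tau\rangle$, and $\langle\nu\rangle$ is the rotation subgroup of $\widehat{N}=\langle\nu,\sigma\rangle$; both are normal in $G_2$ as recorded in Table~\ref{IndTypesTable}.

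So it suffices to show both $(\Gamma_2)_M$ and $(\Gamma_2)_N$ are cycles. I would do this by direct inspection of the adjacency rule in Definition~\ref{IndDefDouble}: a vertex $(i,j)_\delta$ is adjacent exactly to the four vertices $(i\pm1,j\pm1)_{\delta+1}$. The $M$-orbits are the sets $\{(i,j)_\delta : i\in\mathbb{Z}_p\}$ indexed by $(j,\delta)\in\mathbb{Z}_q\times\mathbb{Z}_2$; since $\mu$ fixes both $j$ and $\delta$, the neighbours of the orbit of $(0,0)_0$ all lie in the two orbits indexed by $(1,1)$ and $(-1,1)$, and (because $q$ is odd, these are distinct) the quotient $(\Gamma_2)_M$ has every vertex of valency exactly $2$, hence is a disjoint union of cycles; connectivity of $\Gamma_2$ (which follows since $\gcd$ conditions make $\Gamma(p,q)$ connected and its standard double cover is connected as $\Gamma(p,q)$ is non-bipartite, $p,q$ odd) forces $(\Gamma_2)_M$ to be a single cycle. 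The identical argument with the roles of $\mu,\nu$ and of the first/second coordinates swapped shows $(\Gamma_2)_N$ is a cycle. This completes the proof that $(\Gamma_2,G_2)$ is basic of independent-cycle type.

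The only mild obstacle I anticipate is bookkeeping: making sure the minimal normal subgroups of $G_2$ are correctly identified as $\langle\mu\rangle$ and $\langle\nu\rangle$ and not, say, the ``diagonal'' $C_2$ one might expect from the $(r,s)$ both even cases --- but that case is excluded precisely because $p,q$ are odd, so $Z(D_p)=Z(D_q)=1$ and Lemma~\ref{IndMinDrCr}(c) does not apply. The rest is a routine neighbourhood computation essentially identical to the one already carried out in the proof of Proposition~\ref{IndHMinSubs} and in Row 1.
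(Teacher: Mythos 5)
Your proposal is correct and follows essentially the same route as the paper: identify the minimal normal subgroups of $G_2=\widehat{M}\times\widehat{N}$ as $\langle\mu\rangle$ and $\langle\nu\rangle$ via Lemma~\ref{IndMinDrCr} (case (c) being excluded since $p,q$ are odd), invoke the relevant lemma of \cite{al2017cycle} for the existence of independent cyclic normal quotients, and verify by a direct neighbourhood computation that the quotients modulo $\langle\mu\rangle$ and $\langle\nu\rangle$ are cycles. The only slip is your parenthetical ``one oriented, one unoriented'': in Row 5 both cyclic normal quotients are $G$-unoriented (see Table~\ref{IndTypesThmTable}), but this does not affect the argument.
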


\begin{proof}
	By Definition \ref{IndDefDouble}, $G_{2}(p, q)=\langle\mu, \nu, \sigma, \tau\rangle=\widehat{M} \times \widehat{N}$, where $\widehat{M}=\langle\mu, \sigma \tau\rangle \cong D_{p}$, and $\widehat{N}=$ $\langle \nu, \sigma\rangle \cong D_{q}$. Thus by Lemma \ref{IndMinDrCr}, the minimal normal subgroups of $G_2(p,q)$ are $M:= \langle \mu \rangle \cong C_p$ and $N:= \langle \nu \rangle \cong C_q$.
	By \cite[Lemma 2.11]{al2017cycle}, $({\Gamma_2})_{\widehat{M}}$ and $({\Gamma_2})_{\widehat{N}}$ are independent cyclic normal quotients of $\Gamma_2$. 
	
	Moreover, the $\widehat{N}$-orbits on vertices are $B_i=\left\{(i, j)_\delta \mid j \in \mathbb{Z}_q, \delta \in \mathbb{Z}_2\right\}$ for $i \in \mathbb{Z}_p$, and the $\widehat{M}$-orbits are $C_j=\left\{(i, j)_\delta \mid i \in \mathbb{Z}_p, \delta \in \mathbb{Z}_2\right\}$ for $j \in \mathbb{Z}_q$. The $M$- and $N$-orbits are contained in the $\widehat{M}$- and $\widehat{N}$-orbits respectively, and we check that the $N$-orbits are $B_{i,\epsilon} = \{(i,j)_\epsilon \mid j \in \mathbb{Z}_q$\} for $i \in \mathbb{Z}_p$ and $\epsilon \in \{0,1\}$, and the $M$-orbits are $C_{j,\epsilon} = \{(i,j)_\epsilon \mid i \in \mathbb{Z}_p\}$ for $j \in \mathbb{Z}_q$ and $\epsilon \in \{0,1\}$.
	Now in $(\Gamma_2)_M$, the neighbours of $C_{0,0}$ are $C_{-1,1}$ and $C_{1,1}$, and in $(\Gamma_2)_N$, the neighbours of $B_{0,0}$ are $B_{-1,1}$ and $B_{1,1}$. Hence both $(\Gamma_2)_M$ and $(\Gamma_2)_N$ are cycles and we conclude that $(\Gamma_2, G_2)$ is basic of independent-cycle type.
\end{proof}

To summarise, we have now completed the proof of Theorem~\ref{IndMainTheorem}: if a pair $(\Gamma, G) \in \OG(4)$ is basic of  independent-cycle type, then by Theorem~\ref{1to5Necc}, $\Gamma, G, (r,s)$ satisfy ones of the rows of Table~\ref{IndMainTable}.  Conversely, it follows from the five propositions in this section that each of these entries does indeed yield a pair which is basic of  independent-cycle type.

\section*{Acknowledgements}
The second author acknowledges funding from the ARC Discovery Project Grant DP230101268.
Both authors thank the two anonymous reviewers for some helpful comments which improved the exposition.

\bibliographystyle{plain}
\bibliography{Bibliography}

\begin{thebibliography}{10}

\bibitem{al2017cycle}
Jehan~A. Al-Bar, Ahmad~N. Al-Kenani, Najat~M. Muthana, and Cheryl~E. Praeger.
\newblock Finite edge-transitive oriented graphs of valency four with cyclic
  normal quotients.
\newblock {\em Journal of Algebraic Combinatorics}, 46(1):109--133, 2017.

\bibitem{al2015finite}
Jehan~A. Al-bar, Ahmad~N. Al-kenani, Najat~M. Muthana, Cheryl~E. Praeger, and
  Pablo Spiga.
\newblock Finite edge-transitive oriented graphs of valency four: a global
  approach.
\newblock {\em The Electronic Journal of Combinatorics}, 23(1):P1--10, 2016.

\bibitem{bosma1997magma}
Wieb Bosma, John Cannon, and Catherine Playoust.
\newblock The magma algebra system {I}: The user language.
\newblock {\em Journal of Symbolic Computation}, 24(3-4):235--265, 1997.

\bibitem{godsil2013algebraic}
Chris Godsil and Gordon~F. Royle.
\newblock {\em Algebraic graph theory}.
\newblock Springer Science \& Business Media, 2013.

\bibitem{marusic2001point}
Dragan Maru{\v{s}}i{\v{c}} and Roman Nedela.
\newblock On the point stabilizers of transitive groups with non-self-paired
  suborbits of length 2.
\newblock {\em Journal of Group Theory}, 4(1):19--44, 2001.

\bibitem{marusic1998half}
Dragan Maru\v{s}i\v{c}.
\newblock Half-transitive group actions on finite graphs of valency 4.
\newblock {\em Journal of Combinatorial Theory, Series B}, 73(1):41--76, 1998.

\bibitem{marusic2005quartic}
Dragan Maru\v{s}i\v{c}.
\newblock Quartic half-arc-transitive graphs with large vertex stabilizers.
\newblock {\em Discrete Mathematics}, 299(1-3):180--193, 2005.

\bibitem{marusic1998maps}
Dragan Maru\v{s}i\v{c} and Roman Nedela.
\newblock Maps and half-transitive graphs of valency {$4$}.
\newblock {\em European Journal of Combinatorics}, 19(3):345--354, 1998.

\bibitem{marusic1999alt}
Dragan Maru\v{s}i\v{c} and Cheryl~E. Praeger.
\newblock Tetravalent graphs admitting half-transitive group actions:
  alternating cycles.
\newblock {\em Journal of Combinatorial Theory, Series B}, 75(2):188--205,
  1999.

\bibitem{morris2009strongly}
Joy Morris, Cheryl~E. Praeger, and Pablo Spiga.
\newblock Strongly regular edge-transitive graphs.
\newblock {\em Ars Mathematica Contemporanea}, 2(2):137--155, 2009.

\bibitem{potovcnik2011vstabunbdd}
Primo\v{z} Poto\v{c}nik, Pablo Spiga, and Gabriel Verret.
\newblock Tetravalent arc-transitive graphs with unbounded vertex-stabilizers.
\newblock {\em Bulletin of the Australian Mathematical Society}, 84(1):79--89,
  2011.

\bibitem{potocnik2017radius}
Primo\v{z} Poto\v{c}nik and Primo\v{z} \v{S}parl.
\newblock On the radius and the attachment number of tetravalent
  half-arc-transitive graphs.
\newblock {\em Discrete Mathematics}, 340(12):2967--2971, 2017.

\bibitem{poznanovic2019biquasiprimitive}
Nemanja Poznanovi{\'c} and Cheryl~E. Praeger.
\newblock Biquasiprimitive oriented graphs of valency four.
\newblock In {\em 2017 MATRIX Annals}, pages 337--341. Springer, 2019.

\bibitem{poznanovic2021four}
Nemanja Poznanovi{\'c} and Cheryl~E. Praeger.
\newblock Four-valent oriented graphs of biquasiprimitive type.
\newblock {\em Algebraic Combinatorics}, 4(3):409--434, 2021.

\bibitem{cyclepaper}
Nemanja Poznanovi{\'c} and Cheryl~E. Praeger.
\newblock Basic tetravalent oriented graphs with cyclic normal quotients.
\newblock {\em arXiv preprint \emph{math.CO/2401.08907}}, 2024.

\bibitem{praeger1993nan}
Cheryl~E. Praeger.
\newblock An {O'Nan-Scott} theorem for finite quasiprimitive permutation groups
  and an application to 2-arc transitive graphs.
\newblock {\em Journal of the London Mathematical Society}, 2(2):227--239,
  1993.

\bibitem{praeger1999finite}
Cheryl~E. Praeger.
\newblock Finite normal edge-transitive cayley graphs.
\newblock {\em Bulletin of the Australian Mathematical Society},
  60(2):207--220, 1999.

\bibitem{praeger2003finite}
Cheryl~E. Praeger.
\newblock Finite transitive permutation groups and bipartite vertex-transitive
  graphs.
\newblock {\em Illinois Journal of Mathematics}, 47(1-2):461--475, 2003.

\bibitem{praeger2018permutation}
Cheryl~E. Praeger and Csaba Schneider.
\newblock {\em Permutation groups and cartesian decompositions}, volume 449.
\newblock London Mathematical Society Lecture Note Series, Cambridge University
  Press, 2018.

\bibitem{tutte1966connectivity}
William~Thomas Tutte.
\newblock {\em Connectivity in graphs}, volume~15.
\newblock University of Toronto Press, 1966.

\bibitem{xia2019tetravalent}
Binzhou Xia.
\newblock Tetravalent half-arc-transitive graphs with unbounded nonabelian
  vertex stabilizers.
\newblock {\em Journal of Combinatorial Theory, Series B}, 147:159--182, 2021.

\end{thebibliography}

\end{document}